\newtheorem{theorem}{Theorem}
\newtheorem{corol}[theorem]{Corolary}
\newtheorem{definition}[theorem]{Definition}
\newtheorem{lemma}[theorem]{Lemma}
\newtheorem{nota}[theorem]{Remark}
\newtheorem{prop}[theorem]{Proposition}
\title{Viscosity solution of a Delta Greek nonlinear Black-Scholes equation}
\author{\textit{Rui M.P. Almeida\footnote{ralmeida@ubi.pt}, Te\'{o}filo D. Chihaluca\footnote{teofilo.chihaluca@ubi.pt} and Jos\'{e} C.M. Duque\footnote{jduque@ubi.pt}}\\ University of Beira Interior\\ Center of Mathematics and Applications\\ Covilh\~a, Portugal\\ }
  \date{\today}
\begin{document}

\maketitle




\begin{abstract}
In this paper, a class of nonlinear option pricing models involving transaction costs is considered.
The diffusion coefficient of the nonlinear parabolic equation for the price $V$ is assumed to be a linear function
of the option's underlying asset price and the Gamma Greek $V_{xx}$. The main aim of this work is to study the governing PDE of the  Delta Greek. The existence of viscosity solutions is proved using the vanishing viscosity method. Regularizing the equation by adding a small perturbation to the initial problem, a sequence of approximate solutions $u^{\varepsilon}$ is constructed and then the method of weak limits is applied to prove the convergence of the sequence to the viscosity solution of the Delta equation. The approximate problems constructed are shown to have good regularity, which allows the use of efficient and robust numerical methods.\\
\end{abstract}
\textbf{Keywords}
Delta equation; transaction costs; convergence; vanishing viscosity method.

\section{Introduction}

In financial mathematics, the Black-Scholes model is
frequently used for pricing derivatives by means of the reversed-time
parabolic partial differential equation (\ref{eq:1})(\cite{Mathias2009,vsevcovic2011analytical,wilmott1999}).
\begin{equation}  \label{eq:1}
0=V_{\tau}+\frac{1}{2}\sigma^{2}S^{2}V_{SS}+(r-q)SV_{S}-rV,\qquad S>0,\qquad
\tau\in]0,T[.
\end{equation}
In (\ref{eq:1}) $V$ is the option value, $S$ the underlying asset price, $%
\tau$ the time, $T$ the expiry date, $\sigma$ the volatility, $r$ the
riskless interest rate and $q$ the dividend rate. Equation (\ref{eq:1}) has
an analytical solution (see \cite{FMK}) and to obtain approximate solutions
there are also several reliable numerical methods, such as the binomial
method, the Monte Carlo method, finite difference and finite element
methods. For further details we refer the reader to the survey book \cite%
{Kwok_book}.

The Black-Scholes equation (\ref{eq:1}) is very effective in an idealistic
market, for example, without transaction costs. However transaction costs may
arise, for example, when trading securities. Although they are small in
general, they can lead to an increase in the option price, in which case the
Black-Scholes pricing methodology is no longer valid since perfect hedging
is impossible. Consequently, different models have been proposed to modify
equation (\ref{eq:1}) in order to accommodate transaction costs, such as
those in \cite{HEL,Avellada,Barles1998,MKR}. In these models, the constant
volatility is replaced by a modified volatility function which can depend on
time, on the asset price, on the option value and its derivatives. The
resulting model is a nonlinear equation in nondivergence form.

For the general nonlinear Black-Scholes equation, an explicit solution is
unknown and the numerical techniques available are far less than for the
linear model. Barles and Soner proved in \cite{Barles1998} the existence of
viscosity solutions for their model and made some simulations using finite
differences but no convergence analysis was performed. However, it is known
that explicit schemes have the disadvantage that restrictive conditions on
the discretization parameters (for instance, the ratio of the time and space
step) are needed to obtain stable, convergent schemes. Moreover, the
convergence order is only one in time and two in space. Pooley et all. \cite%
{pooley2003numerical} numerically studied the convergence of some finite
difference schemes applied to a nonlinear Black-Scholes equation and
presented some examples where non-monotone discretization schemes (such as
standard Crank-Nicolson time stepping) can converge to incorrect
solutions, or lead to instability. In \cite{DFJ04}, the authors combine
high-order compact difference scheme techniques to construct numerical
solutions of the transformed non linear equation using the transformation $%
x=ln(S/K)$ with frozen values of the nonlinear volatility term to make the
formulation linear. This transformation transforms the spatial domain $%
[0,\infty]$ to $[-\infty, \infty]$ and, in computations, this infinite
domain has to be truncated, which essentially omits the degeneracy of the
equation at $S=0$. They show that the finite difference solution converges
locally uniformly to the unique viscosity solution of the equation.
Ankudinova and Ehrhardt \cite{AKA} used a Crank-Nicolson method combined
with a high order compact difference scheme to construct a numerical scheme
for the linearized Black-Scholes equation. Company, Jodar and
Pintos in \cite{CJP09} proposed a semi-discretization technique which
approximates the nonlinear equation with a system of ordinary differential
equations and solved the system using the backward Euler scheme. In order to make
the high order scheme work, a smoothing technique for the payoff condition
is used, which essentially changes the nature of the pricing problem. In
\cite{LW13}, a method based on an upwind finite difference
scheme for the spatial discretization and on a fully implicit time-stepping
scheme is developed. The authors prove that the approximate solution converges
unconditionally to the viscosity solution of the equation.

The partial derivatives of the solution, the Greeks, are considered of major importance in finance,see for example \cite{vsevcovic2011analytical}. In particular,
the first spatial derivative, referred to as the Delta Greek in finance, is
the key for the hedging process in time, that is portfolio projection against
market movements as it follows from the Black-Scholes hedging arguments.
There are a few papers concerning the calculation of the Delta Greek
directly. Radoslav Valkov in \cite{Valkov2015} presented a convergence
analysis of a positivity-preserving fitted finite volume element method
(FVEM) for a generalized Black-Scholes equation transformed on finite
interval, degenerating on both boundary points. He first formulated the FVEM
as a Petrov-Galerkin finite element method using a spatial discretization,
previously proposed by the author in \cite{Valkov51}. The Garding coercivity
of the corresponding discrete bilinear form was established. He obtained
stability and error bounds for the solution of the fully-discrete system.
Analysis of the impact of the finite domain transformation on the numerical
solution of the original problem was given. Recently, Koleva and Vulkov (\cite%
{KV18}) constructed and analysed monotone and sign-preserving finite difference
schemes for the Delta equation. They proposed some Newton and Picard
iterative procedures for solving the non-linear systems of algebraic equations.

The theory of continuous viscosity solutions for fully nonlinear second-order elliptic and parabolic equations has been introduced by Crandall-Lions for the Hamilton-Jacobi equations and is well established nowadays. For mores details on this theory we refer the reader to \cite{Crandall1992} and references therein. However, the Delta equations has discontinuous initial condition, so isn't to expect
continuous solution and the conventional theories of viscosity solutions do
not apply. There are several definitions of discontinuous viscosity
solutions, but most of then are rather ad hoc. In \cite{GB03b} we can see a
survey of the latest development on the uniqueness and regularity of the
discontinuous solutions of the Hamilton-Jacobi equation, with discontinuous
initial data that are continuous outside a set of measure zero. They proved
that the discontinuous solutions of the problem is unique when the initial
condition is everywhere continuous. They also
clarified the connections among the discontinuous solutions from different
notions.

Usually, a high order method requires that the solution to the PDE is
sufficiently smooth in order to achieve the expected order of convergence.
However, it is known that the non-linear Black-Scholes equation generally does not have classic smooth solutions, but only viscosity solutions.
Therefore, a numerical solution to the nonlinear Black-Scholes equation by a
high order numerical scheme is not necessarily more accurate than that from
a first-order discretization scheme, mainly due to the non-smoothness of the
given data and the exact solution. Due to the Greeks being relevant for the
quantitative analysis, reliable numerical methods are required for the
pricing of options which not only provide a good approximation for the
price, but also for its derivatives. In this work we study a
simplification of the nonlinear Black-Scholes equation proposed by Barles
and Soner (\cite{Barles1998}) given focus to the corresponding Delta
equation. We prove the existence and uniqueness of possibly discontinuous viscosity
solutions for the Delta equation. We first regularize the equation by adding
a small perturbation parameter and then apply the method of weak limits to
prove the convergence of the classical solutions of the regularized problem
to the viscosity discontinuous solution of the Delta equation. The main goal
of this work is to create a basis to establish the convergence of high order
robust numerical methods since the regularized problem has good smooth
solutions.

The remainder of this paper is organized as follows. In Section $2$, the
problem is described. In Section $3$, we define the regularized approximate
problem as a general nonlinear equation. Section $4$ is dedicated to obtain
some à priori estimatives. In Section $5$ we prove the convergence and,
finally, in Section $5$, we draw some comments.

\section{The nonlinear model}

In 1998 Barles and Soner \cite{Barles1998} developed a complex model that
modifies Equation (\ref{eq:1}) and accommodates transaction costs. Following
the Hedges and Neuberger utility function approach in \cite{HAN}, they
proposed the volatility function
\begin{equation}  \label{eq:546}
\tilde \sigma=\sigma^{2} \left( 1+\Psi(
e^{r(T-\tau)}a^{2}S^{2}V_{SS})\right) ,
\end{equation}
where $\sigma $ is the historical volatility, $a=\frac{k}{\sqrt{\epsilon}}$
and the function $\Psi (A)$ is the solution to the following nonlinear
differential equation (ODE)
\begin{equation}  \label{1982}
\Psi^{\prime }(A)=\frac{\Psi (A)+1}{2 \sqrt{A\Psi(A)}-A}, \quad A \neq 0,
\end{equation}
with the initial condition
\begin{equation}  \label{1982a}
\Psi(0)=0.
\end{equation}

In this way Equation (\ref{eq:1}) becomes the following non-linear
Black-Scholes equation with Barles and Soner's model,

\begin{equation}  \label{eq:princ}
0=V_{\tau}+\frac{1}{2}\sigma^{2}\Big( 1+ \Psi(e^{r(T-\tau)}a^{2}S^{2}V_{SS})%
\Big)S^{2}V_{SS}+(r-q)SV_{S}-rV, \quad S>0,\, \tau \in]0,T[,
\end{equation}
which we will consider for European options, where $S$ pays out continuous
dividend $qSdt$ with time step $dt$.

A European call option allows the buyer to buy an asset of value $S$ for a
value $K$ on maturity date $T$, while an European put option allows the
holder to sell an asset of value $S$ for a value $K$ on maturity date $T$.
For the sake of simplicity, we will only consider the call option. Since the
option can only be exercised on maturity, we complement Equation (\ref%
{eq:princ}) with the following conditions, in order to avoid arbitrariness:
\begin{eqnarray}
V(S,T)=\max \{ S - K,0 \},&\text{when}&S \geq 0;\label{eq:461}\\
\lim_{S \to\infty} \frac{V(S,\tau)}{S-Ke^{-r(T-\tau)}}=1 ,&\text{for}&\tau \in [0,T];\label{eq:462}\\
V(0,\tau)=0,&\text{for}&\tau \in [0,T];\label{eq:550}\\
\lim_{S \to\infty} V_{S}(S,\tau)=1,&\text{for}&\tau \in [0,T].\label{eq:463}
\end{eqnarray}

Barles and Soner proved the existence of a viscosity solution for the
European option with volatility given by (\ref{eq:546}). Their numerical
results indicate an economically significant price difference between the
standard Black-Scholes model and the non-linear model with transaction
costs. 

Although an explicit solution for Problem (\ref{1982})-(\ref{1982a}),
is unknown, the following Lemmas provide some useful information about its
behaviour.

\begin{lemma}[\cite{Barles1998}] A straightforward analysis of the ordinary differential
equation (\ref{1982}) reveals that
\begin{equation}  \label{eq:t}
\lim_{A \to\infty}\frac{\Psi(A)}{A}=1 \qquad \text{and} \qquad \lim_{A
\to-\infty}\Psi(A)=-1.
\end{equation}
\end{lemma}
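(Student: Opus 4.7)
The plan is to extract the qualitative behaviour of $\Psi$ from the ODE \eqref{1982} and then argue by a phase-plane style analysis for each limit. First I would establish the basic features: (i) $\Psi$ is strictly increasing, with $\Psi(A)$ having the same sign as $A$, so that $A\Psi(A)\ge 0$ and the square root is well defined; (ii) $\Psi(A)>-1$ for every $A$, since $\Psi(A_0)=-1$ would give $\Psi'(A_0)=0$ and uniqueness would force $\Psi\equiv-1$, contradicting $\Psi(0)=0$; (iii) on $A>0$ the region $\{\Psi>A/4\}$ is invariant, because $\Psi=A/4$ makes $2\sqrt{A\Psi}-A$ vanish and approaching it from above sends $\Psi'$ to $+\infty$, repelling $\Psi$ upward. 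A short local analysis near $A=0^{+}$ gives $\Psi(A)\sim cA^{1/3}$ with $c>0$, so the trajectory starts well above $A/4$ and stays there.

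For $A\to-\infty$: $\Psi$ is decreasing in this direction and bounded below by $-1$, so the limit $L:=\lim_{A\to-\infty}\Psi(A)\in[-1,0)$ exists. I would rule out $L>-1$ by contradiction: if $L>-1$, then for $|A|$ large, $\Psi+1\ge(L+1)/2>0$, while $\sqrt{A\Psi}=O(\sqrt{|A|})$ is dominated by $|A|$, so $2\sqrt{A\Psi}-A\sim-A$. Hence $\Psi'(A)\sim(L+1)/(-A)$, and integrating from some $A_0\ll 0$ gives $\Psi(A)\sim\Psi(A_0)+(L+1)\log(|A_0|/|A|)\to-\infty$, contradicting boundedness. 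Thus $L=-1$.

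For $A\to+\infty$: first exclude boundedness of $\Psi$, as otherwise $2\sqrt{A\Psi}-A\sim-A<0$ for large $A$ would force $\Psi'<0$, contradicting monotonicity. So $\Psi\to+\infty$. Setting $v(A)=\Psi(A)/A$ and rearranging the ODE gives
\begin{equation*}
v'(A)=\frac{2vA(1-\sqrt{v})+1}{A^{2}(2\sqrt{v}-1)}.
\end{equation*}
On $\{v>1/4\}$ the denominator is positive, while the numerator is positive for $v<1$ and, for $A$ large, negative for $v>1$, making $v=1$ an attracting equilibrium of the $A$-dynamics. A trapping-interval argument around $v=1$ then yields $v(A)\to 1$, i.e.\ $\Psi(A)/A\to 1$. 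The main obstacle is to turn this phase-portrait picture into a rigorous proof, in particular ruling out an oscillatory approach to $v=1$; I expect this to follow from the strict monotonicity of the $v$-flow on each side of the equilibrium guaranteed by the sign analysis above.
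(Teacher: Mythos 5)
The paper offers no proof of this lemma: it is quoted verbatim from Barles and Soner \cite{Barles1998}, who obtain the asymptotics not by a phase-plane argument but from an explicit parametric/implicit representation of the solution of (\ref{1982})--(\ref{1982a}). Your direct qualitative analysis is therefore a genuinely different (and more self-contained) route, and in outline it works. The $A\to-\infty$ half is essentially complete: monotonicity plus the bound $\Psi>-1$ give existence of the limit $L$, and the logarithmic divergence of $\int \Psi'$ under the assumption $L>-1$ is exactly the right contradiction. For $A\to+\infty$, your reduction to $v=\Psi/A$ and the computed equation for $v'$ are correct, but one point needs care: $v=1$ is \emph{not} an equilibrium of that equation, since the numerator $2vA(1-\sqrt{v})+1$ equals $1$ (not $0$) at $v=1$; the true nullcline sits at $v\approx 1+O(1/A)$ and only tends to $1$ as $A\to\infty$. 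The trapping argument should therefore be run on bands $[1-\delta,1+\delta]$ for fixed $\delta>0$ and $A$ large: there $v'>0$ on $\{v=1-\delta\}$ and $v'<0$ on $\{v=1+\delta\}$, so the band cannot be exited once entered, while on $\{1/4+\eta\le v\le 1-\delta\}$ one has $v'\ge c/A$ and the divergence of $\int^{\infty}dA/A$ forces entry into the band (symmetrically from above). This one-sided crossing analysis is precisely what rules out the oscillatory approach you flag as the remaining obstacle, so the gap is closable. Two smaller caveats: the uniqueness argument in (ii) needs the right-hand side of (\ref{1982}) to be Lipschitz near $\Psi=-1$, which holds since the denominator $2\sqrt{|A|}+|A|$ is bounded away from zero there; and the local expansion $\Psi\sim cA^{1/3}$ near $0^{+}$ (which you use to place the trajectory above $\Psi=A/4$) is only a consistency check --- well-posedness of the singular initial value problem at $A=0$ is itself nontrivial and is really inherited from the explicit construction in \cite{Barles1998}.
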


\begin{lemma}[\cite{Barles1998}]
If
\begin{equation*}
A\geq 0 \qquad \text{then} \qquad \Psi(A) \geq 0
\end{equation*}
and if
\begin{equation*}
A\leq 0 \qquad \text{then} \qquad -1< \Psi(A) \leq 0.
\end{equation*}
\end{lemma}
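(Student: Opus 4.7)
The two assertions split into an almost tautological observation about the domain of the ODE and a more delicate uniqueness argument. For the weak inequalities $\Psi(A)\ge 0$ on $A\ge 0$ and $\Psi(A)\le 0$ on $A\le 0$, the right-hand side of (\ref{1982}) contains the radical $\sqrt{A\Psi(A)}$, so for a real-valued $C^{1}$ solution to exist on an interval containing $0$ the product $A\Psi(A)$ must be nonnegative throughout that interval. Combined with the initial condition $\Psi(0)=0$, this directly yields the correct sign of $\Psi$ on each side of the origin.

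The strict lower bound $\Psi(A)>-1$ for $A\le 0$ is the interesting part. I would introduce the shifted unknown $\phi(A):=\Psi(A)+1$; the ODE becomes
\begin{equation*}
\phi'(A)=\frac{\phi(A)}{2\sqrt{A(\phi(A)-1)}-A},
\end{equation*}
with $\phi(0)=1$. For $A<0$ and $\phi$ close to $0$ one has $A(\phi-1)=-A+A\phi>0$ and the denominator $2\sqrt{A(\phi-1)}-A\ge -A>0$, so the right-hand side is locally Lipschitz in $\phi$ in a neighbourhood of any such $(A,0)$. Since $\phi\equiv 0$ is itself a solution, Picard--Lindel\"of uniqueness implies that if $\phi(A^{\ast})=0$ at some $A^{\ast}<0$, then $\phi\equiv 0$ on a neighbourhood of $A^{\ast}$. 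Hence the set $\{A\in(-\infty,0):\phi(A)=0\}$ is both open (by uniqueness) and closed (by continuity). Since $(-\infty,0)$ is connected, this set is either empty or all of $(-\infty,0)$; the latter contradicts $\phi(0)=1$. Thus $\phi$ never vanishes on $(-\infty,0]$, and the intermediate value theorem combined with $\phi(0)=1>0$ forces $\phi>0$, i.e.\ $\Psi>-1$, throughout $A\le 0$.

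The main technical obstacle I anticipate is the singularity of the ODE at $A=0$: there $\Psi=0$ and the denominator $2\sqrt{A\Psi}-A$ vanishes, so Picard--Lindel\"of cannot be invoked at the origin itself. The plan is to confine the uniqueness argument to the open half-line $(-\infty,0)$, where the radical is regular because $A(\phi-1)$ is bounded away from $0$ whenever $\phi$ is close to $0$, and to use continuity at $A=0$ only to import the initial value. Verifying the local Lipschitz estimates that make Picard--Lindel\"of applicable in a neighbourhood of a hypothetical zero of $\phi$ is the point that requires the most care.
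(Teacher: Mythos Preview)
The paper does not supply its own proof of this lemma: it is quoted verbatim from \cite{Barles1998} and left unproved, as are the neighbouring Lemmas~1, 3 and~4. So there is no argument in the paper to compare against.

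That said, your proposed proof is essentially correct and self-contained. The first part---that $A\Psi(A)\ge 0$ is forced by the real-valuedness of the radical in the right-hand side of the ODE---is indeed the natural observation. For the strict bound $\Psi(A)>-1$ on $A\le 0$, your reduction to $\phi=\Psi+1$ and the open--closed connectedness argument via Picard--Lindel\"of is clean and valid: for $A<0$ fixed and $\phi$ near $0$ one has $A(\phi-1)=-A+A\phi>0$, so the denominator $2\sqrt{A(\phi-1)}-A$ is bounded below by $-A>0$ and the right-hand side is smooth in $\phi$; hence $\phi\equiv 0$ is the unique solution through any point where $\phi$ vanishes, and the contradiction with $\phi(0)=1$ follows by continuity. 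You have also correctly isolated the only delicate point, namely that uniqueness cannot be invoked at $A=0$ itself and must be confined to the open half-line.

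In \cite{Barles1998} the authors take a different route: they construct $\Psi$ from an explicit inverse function (writing $A$ as a function of $\Psi$, which can be integrated in closed form), and the sign and range properties are then read off directly from that implicit representation rather than from a dynamical uniqueness argument. Your ODE-based proof is more elementary in that it avoids the explicit formula, at the cost of requiring the Picard--Lindel\"of machinery; either approach is adequate here.
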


\begin{lemma}[\cite{Barles1998}] The differential $\Psi^{\prime }(A)\geq 0$.
\end{lemma}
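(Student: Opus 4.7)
The plan is to check the sign of the right-hand side of the defining ODE
\[
\Psi'(A) = \frac{\Psi(A)+1}{2\sqrt{A\Psi(A)}-A}
\]
by examining numerator and denominator separately, using the previous two lemmas.

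First I would dispose of the numerator. From Lemma 2, $\Psi(A) \geq 0$ when $A \geq 0$ and $\Psi(A) > -1$ when $A \leq 0$, so in every case $\Psi(A) + 1 > 0$. Next, Lemma 2 also yields $A\Psi(A) \geq 0$ everywhere, so the square root $\sqrt{A\Psi(A)}$ is real and non-negative, and one just has to control the sign of $2\sqrt{A\Psi(A)} - A$.

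For the denominator I would split into cases. If $A \leq 0$ then $-A \geq 0$ and $2\sqrt{A\Psi(A)} \geq 0$, so the denominator is non-negative; for $A < 0$ it is strictly positive, and the conclusion $\Psi'(A) \geq 0$ follows immediately. For $A > 0$ the inequality $2\sqrt{A\Psi(A)} - A > 0$ is equivalent to $\Psi(A) > A/4$, and this is the genuine content of the lemma. I would argue it by examining the phase plane of the ODE: near $A = 0^+$, a local ansatz $\Psi(A) \sim c A^{1/3}$ is forced by balancing terms in the ODE, so $\Psi(A)/A \to +\infty$ as $A \to 0^+$; at infinity, Lemma 1 gives $\Psi(A)/A \to 1 > 1/4$. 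Along the critical curve $\Psi = A/4$ the denominator vanishes while the numerator is positive, so the vector field points vertically upward there; hence an integral curve starting above this curve cannot cross it, and the constructed solution satisfies $\Psi(A) > A/4$ for all $A > 0$.

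Combining the two cases, both numerator and denominator of the ODE have the same (positive) sign on $\{A \neq 0\}$, proving $\Psi'(A) \geq 0$. The main obstacle is exactly the barrier argument for $A > 0$: one must know that the solution $\Psi$ selected by the initial condition $\Psi(0)=0$ and the asymptotics in Lemma 1 is the branch lying above the singular locus $\Psi = A/4$, rather than a branch on which the denominator changes sign. The sign analysis of numerator and all the other cases are essentially immediate consequences of Lemma 2.
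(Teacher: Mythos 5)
The paper offers no proof of this lemma: it is quoted, together with Lemmas 1 and 2, directly from Barles and Soner \cite{Barles1998}, so there is no in-paper argument to compare against. Judged on its own, your proof is essentially correct and is the natural one. The numerator $\Psi(A)+1>0$ and the non-negativity of $A\Psi(A)$ follow from Lemma 2 exactly as you say, and the case $A<0$ is immediate since then $-A>0$. The only substantive step is the sign of $2\sqrt{A\Psi(A)}-A$ for $A>0$, and there your barrier argument works but is heavier than needed and leaves two points informal: the local ansatz $\Psi(A)\sim cA^{1/3}$ is obtained by a balancing heuristic rather than derived, and ``an integral curve above the singular locus cannot cross it'' needs the standard difference-quotient contradiction to be rigorous (if $A_*$ were the first touching point with $\Psi(A_*)=A_*/4$, then $\Psi(A_*)-\Psi(A)<(A_*-A)/4$ for $A<A_*$, while $\frac{1}{A_*-A}\int_A^{A_*}\Psi'(s)\,ds\to+\infty$ because the denominator of the ODE tends to $0^+$). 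A shorter route for $A>0$: since $\Psi$ is by hypothesis a classical solution of the ODE on $A\neq 0$, the denominator $2\sqrt{A\Psi(A)}-A$ is continuous and nowhere zero on $(0,\infty)$; by Lemma 1, $\Psi(A)/A\to 1$ as $A\to\infty$, so the denominator behaves like $2A-A=A>0$ for large $A$, and hence is positive on all of $(0,\infty)$ by the intermediate value theorem. Combined with your treatment of the numerator and of $A<0$, this yields $\Psi'(A)\geq 0$ (in fact $>0$) with no asymptotic analysis at the origin.
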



\begin{lemma}[\cite{CJP09}] If $\ \ A\geq 0\qquad $vthen $\ \Psi (A)\leq C_{1}A+C_{2}$ \
with $\ \ C_{1}\approx 1.1$ \ and $\ C_{2}\approx 2.62$.
\end{lemma}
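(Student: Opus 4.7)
The plan is to combine the asymptotic slope $\Psi(A)/A \to 1$ supplied by Lemma~1 with the monotonicity $\Psi'(A) \geq 0$ supplied by Lemma~3. The constant $C_1 = 1.1$ should come from the fact that $1$ is the strict limit of $\Psi(A)/A$ at infinity, while $C_2 \approx 2.62$ should absorb the behaviour of $\Psi$ on a bounded initial interval.

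First, I would invoke Lemma~1 to produce, for any $\delta > 0$, a threshold $A_0 = A_0(\delta) > 0$ such that $\Psi(A) \leq (1+\delta)A$ for every $A \geq A_0$. Taking $\delta = 0.1$ fixes $C_1 = 1.1$ and the corresponding $A_0$. Next, I would use Lemma~3: since $\Psi$ is non-decreasing, $\Psi(A) \leq \Psi(A_0)$ for all $A \in [0, A_0]$. Setting $C_2 := \Psi(A_0)$, the inequality then follows by cases: for $A \geq A_0$, $\Psi(A) \leq C_1 A \leq C_1 A + C_2$; for $A \in [0, A_0]$, $\Psi(A) \leq C_2 \leq C_1 A + C_2$. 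Either way $\Psi(A) \leq C_1 A + C_2$.

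The hard part is quantitative rather than structural. Because the ODE~(\ref{1982})--(\ref{1982a}) has no closed-form solution, the concrete numerical values $A_0$ and $C_2 \approx 2.62$ can only be extracted by numerically integrating the equation, which is precisely why the $\approx$ symbol appears in the statement. A more symbolic alternative would be to try to show directly that $\phi(A) := C_1 A + C_2$ is a supersolution: assuming a first crossing point $A^* > 0$ with $\Psi(A^*) = \phi(A^*)$, one forces $\Psi'(A^*) \geq C_1$ and substitutes into the ODE, obtaining after squaring a quadratic inequality in $A^*$ whose leading coefficient $-4 C_1^2 (C_1-1)$ is negative thanks to $C_1 > 1$. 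This excludes crossings beyond some explicit threshold $A^*_{\max}$, but still leaves a bounded residual interval where the bound must be verified numerically, so the same quantitative dependence re-emerges there.
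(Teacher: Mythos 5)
The paper itself gives no proof of this lemma: it is imported verbatim from \cite{CJP09}, so there is nothing internal to compare your argument against. Taken on its own terms, your main argument is correct and complete as a \emph{soft} statement: the limit $\lim_{A\to\infty}\Psi(A)/A=1$ from Lemma~1 yields, for $\delta=0.1$, a threshold $A_0$ with $\Psi(A)\leq 1.1\,A$ on $[A_0,\infty[$, and the monotonicity of $\Psi$ from Lemma~3 (together with $\Psi\geq 0$ on $[0,\infty[$ from Lemma~2, which you need so that dropping either term $C_1A$ or $C_2$ is harmless) bounds $\Psi$ by $\Psi(A_0)$ on $[0,A_0]$; the two cases then assemble into $\Psi(A)\leq C_1A+C_2$. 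You are also right that this route cannot produce the specific values $C_1\approx 1.1$, $C_2\approx 2.62$ without quantitative information on $\Psi$, i.e.\ without numerically integrating (\ref{1982})--(\ref{1982a}) or exploiting the implicit representation of $\Psi$ used in \cite{Barles1998,CJP09}; the ``$\approx$'' in the statement signals exactly this. Your alternative supersolution sketch is plausible but, as you note, still bottoms out in a numerical check on a bounded interval, and it additionally needs care with the sign of the denominator $2\sqrt{A\Psi(A)}-A$ in (\ref{1982}) before the substitution $\Psi'(A^*)\geq C_1$ can be squared and rearranged. For the purposes of this paper, where only the existence of a linear upper bound for $\Psi$ on $[0,\infty[$ is used to justify the simplification $\Psi(A)=A$, your first argument suffices.
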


\begin{figure}[!htb]
\centering
\includegraphics[width=0.45\textwidth]{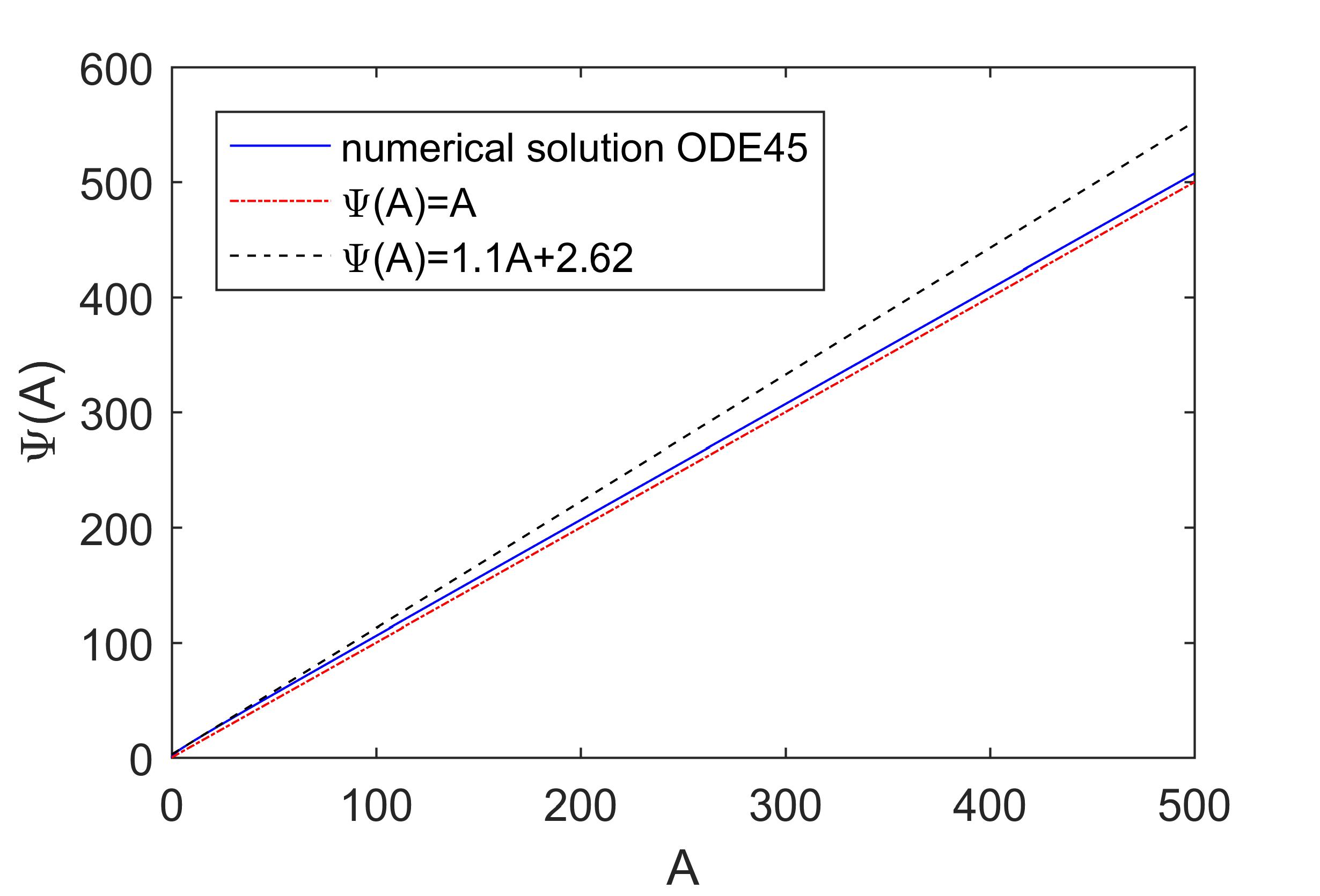}
\caption{Numerical simulation of $\Psi$.}
\label{fig1}
\end{figure}
A numerical solution for Problem (\ref{1982})-(\ref{1982a}) is represented
in Figure \ref{fig1}. By the lemmas above and figure \ref{fig1} is
reasonable to simplify the model considering $\Psi(A)=A$ for $A>0$. So, in
this work, we study the Black-Scholes Equation (\ref{eq:princ}), in which
the volatility is assumed to be a function of the underlying asset $S$, and
time $\tau$ and the Gamma of option (the Greek Gamma is a second derivative $%
V_{SS}$) that is,
\begin{equation}  \label{eq:400}
0=V_{\tau}+\frac{1}{2}\sigma^{2}\left( 1+
e^{r(T-\tau)}a^{2}S^{2}V_{SS}\right)S^{2}V_{SS}+(r-q)SV_{S}-rV, \quad S>0,\,
\tau \in ]0,T[.
\end{equation}

It is known that $V_{SS}\geq 0$ for European Puts and Calls in the absence
of transaction costs. In case $V_{SS}<0$, Problem (\ref{eq:400}) is
ill-posed and without solution for general pay-off functions (see \cite%
{HEL,Avellada} for similar cases). Therefore we will henceforth assume that $%
V_{SS}>0$.

\subsection{Delta equation}

Differentiating (\ref{eq:400}) with respect to $S$, the resulting equation
is
\begin{equation}  \label{eq:4000}
V_{S\tau}+\left(\frac{1}{2}\sigma^{2}\Big( 1+ e^{r(T-\tau)}a^{2}S^{2}V_{SS}%
\Big)S^{2}V_{SS}\right)_{S}+(r-q)SV_{SS}-qV_S=0.
\end{equation}
Differentiating (\ref{eq:461}), we have
\begin{eqnarray}  \label{eq:40010}
V_{S}(S,T)=%
\begin{cases}
0, \qquad S<K \\
1,\qquad S>K \\
\end{cases}%
.
\end{eqnarray}
Considering $x=S$, $t=T-\tau$ and $u(x,t)=V_S(S,T-\tau)$, we obtain the
parabolic partial differential equation in divergence form,
\begin{equation}  \label{delta_eq}
u_t=\left( \frac{1}{2}\sigma^{2}\big( 1+ e^{rt}a^{2}x^{2}u_{x}\big)%
x^{2}u_{x}\right)_{x}+(r-q)xu_x-qu, \quad x>0,\quad t \in]0,T].
\end{equation}
which we will call Delta equation. We will consider the corresponding
initial condition
\begin{eqnarray}  \label{eq:4001}
u(x,0)=u_0(x)=%
\begin{cases}
0, \qquad 0< x<K \\
0.5, \qquad x=K \\
1,\qquad x>K \\
\end{cases}%
\end{eqnarray}
By (\ref{eq:463}), the boundary conditions are
\begin{eqnarray}  \label{eq:4002}
u(0,t)=g_1(t)=0,\quad t\in[0,T]
\end{eqnarray}
and
\begin{eqnarray}  \label{eq:4002a}
\lim_{x\to \infty}u(x,t)=1,\quad t\in[0,T].
\end{eqnarray}
Note that condition $u(0,t)=0 $ is necessary in order to guarantee
compatibility between the equation and the initial data. It is not possible
to define $u_0$ in $x=K$ by the Black-Scholes model, since $V(S,T)$
is not differentiable at $S=K$. However, as we shall see below, this value
is not essential, so we will consider the value $u_0(K)=0.5$ for practical
purposes.

\begin{prop}
If function $V(S,\tau)$ is a solution of Problem (\ref{eq:princ})-(\ref%
{eq:463}), then $u(x,t)=V_S(S,T-\tau)$ is a solution of problem (\ref%
{delta_eq})-(\ref{eq:4002a}). Conversely if $u(x,t)$ is a solution of
Problem (\ref{delta_eq})-(\ref{eq:4002a}), then
\begin{equation}  \label{tans_inv}
V(S,\tau)=\int_0^Su(x,T-\tau)\ dx
\end{equation}
is a solution of problem (\ref{eq:princ})-(\ref{eq:463}).
\end{prop}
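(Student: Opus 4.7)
The plan is to verify the PDE and each side condition in both directions, essentially by unwinding the change of variables $x=S$, $t=T-\tau$ together with the identification $u=V_S$.

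For the forward direction, I would first record the chain rule identities $u_x=V_{SS}$ and $u_t=-V_{S\tau}$, then differentiate (\ref{eq:princ}) with respect to $S$ exactly as in the derivation of (\ref{eq:4000}); substituting the above identities and the fact that $e^{r(T-\tau)}=e^{rt}$ yields (\ref{delta_eq}) directly. The initial datum $u_0$ is obtained by differentiating the payoff $V(S,T)=\max\{S-K,0\}$ at $\tau=T$, with the value $0.5$ at $x=K$ adopted as a convention since the payoff is not differentiable there (as noted after (\ref{eq:4002a}), this choice is immaterial). The limit at infinity (\ref{eq:4002a}) is immediate from (\ref{eq:463}); the compatibility $u(0,t)=0$ requires a short argument, which I would extract by formally evaluating (\ref{eq:princ}) at $S=0$: the $S^2V_{SS}$ and $SV_S$ terms drop out, $V(0,\tau)=0$ gives $V_\tau(0,\tau)=0$, so the equation is consistent with the requirement $V_S(0,\tau)=0$ that we impose on $u$.

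For the converse, set $V(S,\tau)=\int_0^S u(x,T-\tau)\,dx$ and compute $V_S=u$, $V_{SS}=u_x$, and $V_\tau=-\int_0^S u_t(x,T-\tau)\,dx$. The core of the verification is to integrate (\ref{delta_eq}) in $x$ on $[0,S]$: the divergence term collapses to its endpoint value at $x=S$ (the contribution at $x=0$ vanishes because of the $x^2$ factor), and the drift term is handled by integration by parts,
\begin{equation*}
\int_0^S (r-q)x\,u_x\,dx=(r-q)S\,u(S,t)-(r-q)\int_0^S u\,dx,
\end{equation*}
so that combining with $-\int_0^S q u\,dx$ produces precisely $(r-q)SV_S-rV$. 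Substituting back into the expression for $V_\tau$ recovers (\ref{eq:princ}). The initial condition (\ref{eq:461}) follows by integrating $u_0$, and (\ref{eq:550}) is trivial.

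The main obstacle, and the place where I expect to need the most care, is the asymptotic condition (\ref{eq:462}). The datum $\lim_{x\to\infty}u(x,t)=1$ gives $V(S,\tau)=S+o(S)$ as $S\to\infty$ by a straightforward $\varepsilon/\delta$ estimate on $\int_0^S(1-u)\,dx$; since the denominator $S-Ke^{-r(T-\tau)}=S+O(1)$ has the same leading asymptotics, the quotient tends to $1$. A parallel remark applies to (\ref{eq:463}) in the forward direction, where one uses $u(x,t)\to 1$ as $x\to\infty$ to read off the corresponding behaviour of $V_S$. Assembling these pieces establishes both implications, and hence the equivalence of the two problems under the transformation (\ref{tans_inv}).
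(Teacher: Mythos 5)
Your proposal is correct and follows essentially the same route as the paper: the forward direction is the differentiation/change-of-variables computation already carried out in deriving (\ref{delta_eq}), and the converse is verified by integrating the Delta equation over $[0,S]$ and integrating the drift term by parts, exactly as in the paper's proof. The only cosmetic difference is at condition (\ref{eq:462}), where the paper invokes L'H\^opital's rule while you estimate $\int_0^S(1-u)\,dx=o(S)$ directly; both arguments are valid and rest on the same hypothesis (\ref{eq:4002a}).
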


\begin{proof}
The first statement was proved above.
Suppose now that $V(x,\tau)$ is given by (\ref{tans_inv}). Then making the change of variable $t=T-\tau$, it follows that
$$V_{\tau}(S,\tau)=\int_0^Su_{\tau}(x,T-\tau)\ dx=\int_0^S-u_{t}(x,t)\ dx=$$
$$=-\int_0^S\left( \frac{1}{2}\sigma^{2}\big( 1+ e^{rt}a^{2}x^{2}u_{x}\big)x^{2}u_{x}\right)_{x}+(r-q)xu_x-qu\ dx.$$
Using the integration by parts, we obtain
$$V_{\tau}(S,\tau)=-\frac{1}{2}\sigma^{2}\big( 1+ e^{rt}a^{2}S^{2}u_{S}(S,t)\big)S^{2}u_{S}(S,t)-(r-q)Su(S,t)+q\int_0^Su(x,t)\ dx$$
Differentiating (\ref{tans_inv}) we have $V_S(S,\tau)=u(S,T-\tau)$ and $V_{SS}(S,\tau)=u_S(S,T-\tau)$. Substituting in the last equation, we obtain (\ref{eq:princ}).
In order to obtain (\ref{eq:461}), we just need to calculate de integral\\
$$V(S,T)=\int_0^Su(x,0)\ dx=\left\{\begin{array}{lll}
0&,& S<K\\
0&,& S=K\\
S-K&, & S>K\end{array}\right..$$
As we mentioned earlier any real value of $u(K,0)$ verifies (\ref{eq:461}). Condition (\ref{eq:462}) is obtained applying the L'Hôpital's rule, that is,
$$\lim_{S \to\infty} \frac{V(S,\tau)}{S-Ke^{-r(T-\tau)}}=\lim_{S \to\infty}\frac{\int_0^Su(x,T-\tau)\ dx}{S-Ke^{-r(T-\tau)}}=\lim_{S \to\infty}\frac{u(S,T-\tau)}{1}=1$$
 by (\ref{eq:4002a}). Condition (\ref{eq:550}) is obvious. Finally, condition (\ref{eq:463}) is the same as condition (\ref{eq:4002a}).
\end{proof}

Since it is difficult to deal with an infinite spatial domain, we will
consider the spatial domain $]0,b[$ with $b$ sufficiently far from $K$ and
substitute equation (\ref{eq:4002a}) by
\begin{equation}
u(b,t)=1,\quad t\in \lbrack 0,T].  \label{eq_fr_b}
\end{equation}%
There are two main problems  concerning the Delta function that we need to deal with.
First, the initial condition is not smooth, it has a discontinuity at $x=K$ and secondly the diffusion term becomes zero at $x=0$.
So we expect that the Delta equation has only a viscosity solution. For ease of
reading, we recall the notion of viscosity solution in the next subsection.

\subsection{Viscosity solution}

Consider the following general problem
\begin{equation}  \label{def_s200}
\left\{%
\begin{array}{l}
u_t+F(x,t,u,u_x,u_{xx})=0, \qquad (x,t)\in Q_{T}=\Omega\times]0,T], \\
u(x,t)=g(x,t),\qquad (x,t)\in\partial\Omega\times[0,T[ \\
u(x,0)=u_0(x),\qquad x\in\bar\Omega%
\end{array}%
\right.
\end{equation}
where $F: \mathds{R}^5 \rightarrow \mathds{R} $ is a continuous function and
$g$, $u_0$ are given functions. To extend the technique of viscosity
solutions to functions that need not to be continuous, we define
\begin{equation}  \label{h100}
{u}^{*}(x,t)=\lim_{r \to 0 } sup \{ u(y,\tau):|(y,t)-(x,t)| \leq r,\,
(y,\tau)\in Q_{T} \}
\end{equation}
and
\begin{equation}  \label{h101}
{u}_{*}(x,t)=\lim_{r \to 0 } inf \{ u(y,t):|(y,t)-(x,t)| \leq r,\,
(y,\tau)\in Q_{T} \}
\end{equation}
which are called the upper and lower semicontinuous envelopes of $u$
respectively.

\begin{definition}
\label{def_vis} Let $u: \bar{Q}_{T} \rightarrow \mathds{R}$ be a bounded
function and $\phi :\bar{Q}_{T} \rightarrow \mathds{R}$ be any $C^{2,1}(\bar{%
Q}_T)$ function.

\begin{itemize}
\item The function $u$ is called a viscosity subsolution of (\ref{def_s200})
if, whenever $(x_{0},t_0)$ is a maximum of $u^* -\phi$, the following
conditions are satisfied:
\begin{equation}  \label{def_s2000}
\phi_t(x_0,t_0)+F(x_{0},t_0,u^*(x_{0},t_0),\phi_x
(x_{0},t_0),\phi_{xx}(x_{0},t_0))\leq 0,
\end{equation}
when $(x_0,t_0)\in Q_T$,

$u^*(x_0,t_0)-g(x_0,t_0)\leq 0 \text{ or } (\ref{def_s2000}), \text{ when }
(x_0,t_0)\in\partial\Omega\times[0,T[,$\newline
and

$u^*(x_0,t_0)-u_{0*}(x_0)\leq 0 \text{ or } (\ref{def_s2000}), \text{ when }
x_0\in\bar\Omega,t_0=0.$

\item The function $u$ is called a viscosity supersolution of (\ref{def_s200}%
) if, whenever $(x_{0},t_0)$ is a minimum of $u_* -\phi$, the following
conditions are satisfied:
\begin{equation}  \label{def_s2001}
\phi_t(x_0,t_0)+F(x_{0},t_0,u_*(x_{0},t_0),\phi_x
(x_{0},t_0),\phi_{xx}(x_{0},t_0))\geq 0,
\end{equation}
when $(x_0,t_0)\in Q_T$,

$u_*(x_0,t_0)-g(x_0,t_0)\geq 0 \text{ or } (\ref{def_s2001}), \text{ when }
(x_0,t_0)\in\partial\Omega\times[0,T[$,\newline
and

$u_*(x_0,t_0)-u_{0}^*(x_0)\geq 0 \text{ or } (\ref{def_s2001}), \text{ when }
x_0\in\bar\Omega,t_0=0.$

\item A bounded function $u$ which is both a viscosity subsolution and a
supersolution is called a (non necessarily continuous) viscosity solution.
\end{itemize}
\end{definition}


\begin{nota}
\label{eq:9522} The definition of viscosity solution is an absolute one.
This means that if $u$ is a viscosity subsolution in $\Omega $, then it is
also a subsolution in $\Omega^{\prime }$, where $\Omega^{\prime }\subset
\Omega $.
\end{nota}

\begin{nota}
\label{eq:9523} In the definition of viscosity solution, local maximum can
be replaced by global maximum and also by strict local or global maximum.
Moreover $C^{2,1}$ functions can be substituted by smooth functions. Also we can
assume that the local maximum is zero. Similar remarks apply to supersolutions.
\end{nota}

To prove the existence of viscosity solutions to Problem (\ref{delta_eq})-(%
\ref{eq:4002}) and (\ref{eq_fr_b}) we will use the vanishing viscosity
method to construct a sequence of approximate solutions $u^{\varepsilon}$, we will
then apply the method of weak limits to prove the convergence of the
sequence to the viscosity solution of the Delta equation. The advantage of
the weak limit method  is that it allows passing to the limits with only uniforme $L^{\infty}$ estimates on $u^{\varepsilon}$.

Problem (\ref{delta_eq})-(\ref{eq:4002}) and (\ref{eq_fr_b}) can be
written as (\ref{def_s200}) with $\Omega =]0,b[$,
\begin{eqnarray}
F(x,t,u,u_{x},u_{xx}) &=&-\left( 0.5x^{2}\sigma ^{2}\left(
1+2e^{rt}a^{2}x^{2}u_{x}\right) \right) u_{xx}  \notag  \label{eq:15000} \\
&&-2\sigma ^{2}e^{rt}a^{2}x^{3}u_{x}^{2}-(r-q+\sigma ^{2})xu_{x}+qu
\end{eqnarray}%
with $g(0,t)=1$, $g(b,t)=1$ and $u_{0}$ defined by (\ref{eq:4001}).

\section{Regularization}

In order to overcome the difficulties, in dealing with the Delta Equation, raised above,
we propose a small perturbation of the initial problem by
considering a new approximate problem. We  seek  a function $
u^{\varepsilon}(x,t)$ that satisfies the equation
\begin{equation}  \label{epsilon_eq}
u^{\epsilon}_t=(\left(a^{\varepsilon}_0(t,x,u^{\varepsilon}_x)u^{
\varepsilon}_x\right)_{x}+(r-q)xu^{\varepsilon}_x-qu^{\varepsilon}, \quad
(x,t)\in Q_T.
\end{equation}
with $a_0^{\varepsilon}=0.5x^{2}\sigma^{2}\big(
1+e^{rt}a^{2}x^{2}u_{x}^{\varepsilon}\big)+\varepsilon$, $\varepsilon>0$ small,
 boundary conditions (\ref{eq:4002}), (\ref{eq_fr_b}) and  initial
condition
\begin{eqnarray}  \label{eq:40011}
u^{\varepsilon}(x,0)=
\begin{cases}
u_{0}(x),\qquad 0\leq x<K-\varepsilon \\
H_{5}(x),\qquad K-\varepsilon \leq x\leq K+\varepsilon \\
u_{0}(x), \qquad K+\varepsilon<x \leq b \\
\end{cases}%
\end{eqnarray}
where $H_{5}(x)$ is the Hermite polynomial of degree $5$ that satisfies $%
H_5(K-\varepsilon)=u_0(K-\varepsilon)$, $H_5(K+\varepsilon)=u_0(K+%
\varepsilon)$, $H_5^{\prime }(K-\varepsilon)=H_5^{\prime \prime
}(K-\varepsilon)=H_5^{\prime }(K+\varepsilon)=H_5^{\prime \prime
}(K+\varepsilon)=0$.\newline
In order to define  a  weak solution, we  multiply (\ref{epsilon_eq}) by $\psi$
and integrate with respect to $x$:
\begin{equation*}
\int\limits_{0}^{b} u^{\varepsilon}_{t}\psi\,dx -
\int\limits_{0}^{b}(a^{\varepsilon}_{0}u^{\varepsilon}_{x})_{x}\psi\,dx
-\int\limits_{0}^{b}(r-q)xu^{\varepsilon}_{x}\psi\,dx +
\int\limits_{0}^{b}qu^{\varepsilon}\psi\,dx =0.
\end{equation*}
Then, integrating  by parts and assuming that $\psi(0)=\psi(b)=0$, we obtain
\begin{equation}  \label{eq:10}
\int\limits_{0}^{b} u^{\varepsilon}_{t}\psi\,dx +
\int\limits_{0}^{b}a^{\varepsilon}_{0}u^{\varepsilon}_{x}\psi_{x}\,dx -
\int\limits_{0}^{b}(r-q)xu^{\varepsilon}_{x}\psi\,dx +
\int\limits_{0}^{b}qu^{\varepsilon}\psi\,dx =0.
\end{equation}
Note that for Equation (\ref{eq:10}) to make sense, we must have $u^{\varepsilon}$, $%
u^{\varepsilon}_t$ and $u^{\varepsilon}_x$ $\in L^2(0,b)$, for $t \in ]0,T]$. Taking into consideration conditions  (\ref{eq:4002}), (\ref{eq_fr_b}) we choose
the test function space to be
\begin{equation*}
V_{0} = \left\lbrace \psi, \psi_x \in L^2(0,b):\psi(0) = \psi(b)=0
\right\rbrace,
\end{equation*}
and for the space solution we consider
\begin{equation*}
V=\{u,u_{t},u_{x}\in L^{2}(0,b):u(0,t)=0,u(b,t)=1,\, \text{ for all }t\in
\lbrack 0,T]\}.
\end{equation*}

\begin{definition}[Weak solution]
\label{def_sf} A function $u^{\varepsilon}\in V$ is said to be a weak
solution of problem (\ref{epsilon_eq})-(\ref{eq:40011}) if it satisfies (\ref%
{eq:40011}) and  (\ref{eq:10}) for all $\psi \in V_0$, and $t
\in ]0,T[$. Relation (\ref{eq:10}) must be understood as an equality in $%
D^{\prime }(0,T)$.
\end{definition}

We are interested in obtaining classical solutions using the Leray-Schauder
existence theory. So following \cite{LSU68}, we need to obtain à priori
estimates for all possible weak solutions of Equation (\ref{epsilon_eq}).

\section{ À priori estimates}

This section is entirely devote  to obtaining the desired estimates. The main
tools required are integral calculations, imbedding inequalities and Gronwall's Lemmas.
Since we have the conditions
\begin{equation*}
\psi(0)=\psi(b)=0, \qquad u^{\varepsilon}(0,t)=0 \qquad \mathrm{and} \qquad
u^{\varepsilon}(b,t)=1,
\end{equation*}
we are not able to considerer $\psi=u^{\varepsilon}$ in (\ref{eq:10}), then
we will introduce a new function $v$ defined by
\begin{equation*}
v(x,t)=u^{\varepsilon}(x,t)- \frac{x}{b}.
\end{equation*}
It has the properties
\begin{equation*}
u^{\varepsilon}(x,t)=v(x,t)+ \frac{x}{b},
\end{equation*}
\begin{equation*}
u_{t}^{\varepsilon}(x,t)=v_{t}(x,t),
\end{equation*}
\begin{equation*}
u_{x}^{\varepsilon}(x,t)=v_{x}(x,t)+ \frac{1}{b}\qquad \text{and} \qquad
u_{xx}^{\varepsilon}(x,t)=v_{xx}(x,t).
\end{equation*}
According to (\ref{epsilon_eq}) the new function satisfies
\begin{equation}  \label{prob_aux}
v_t= \left(a^{\varepsilon}_{0}\left(v_{x}+\frac{1}{b}\right)\left(v_{x}+%
\frac{1}{b}\right)\right)_{x}+(r-q)x\left(v_{x}+\frac{1}{b}\right)-q\left(v+%
\frac{x}{b}\right),
\end{equation}
that is,
\begin{equation}  \label{prob2_aux}
v_t= ((a_{1}+a_{2}+\varepsilon )v_{x})_{x}+(r-q)xv_{x}-qv+f, \qquad (x,t)\in
Q_T
\end{equation}
with
\begin{equation*}
a_{1}=0.5x^{2}\sigma^{2}\left(1+ e^{rt}a^{2}x^{2}\left(v_{x}+\frac{1}{b}%
\right)\right),
\end{equation*}
\begin{equation*}
a_{2}=\frac{0.5\sigma^{2}e^{rt}a^{2}x^{4}}{b},
\end{equation*}
\begin{equation*}
f=\frac{(\sigma^{2}+r+2q)x}{b}+ \frac{2x^{3}\sigma^{2}e^{rt}a^{2}}{b^{2}}.
\end{equation*}
The boundary and initial conditions for this  new problem are
\begin{equation}  \label{cfv}
v(0,t)=0, \quad v(b,t)=0,
\end{equation}
\begin{eqnarray}  \label{eq:40014}
v(x,0)=v_{0}(x)=%
\begin{cases}
-\frac{x}{b},\qquad 0\leq x<k-\varepsilon \\
H_{5}(x)-\frac{x}{b},\qquad k-\varepsilon \leq x\leq k+\varepsilon \\
1-\frac{x}{b}, \qquad k+\varepsilon<x \leq b. \\
\end{cases}%
.
\end{eqnarray}
The definition of weak solution to Problem (\ref{prob2_aux})-(\ref{eq:40014}) is similar to Definition \ref{def_sf}.

\begin{nota}
\label{eq:95} Assumption $u_{x}\geq 0$ is equivalent to $v_{x}(x,t)+ \frac{1%
}{b} \geq 0$, and then $a_1\geq 0$, $(x,t)\in Q_T$.
\end{nota}

First we prove that the initial condition is smooth.

\begin{lemma}
Let $v_0(x)$ be as defined in (\ref{eq:40014}). Then, for $x\in[0,b]$,
\begin{eqnarray}
&&|v_0(x)|\leq C,  \label{v0linf} \\
&& |v'_0(x)|\leq C\varepsilon^{ -1},\qquad \text{ and }  \label{v0xlinf} \\
&& |v''_0(x)|\leq C\varepsilon^{ -2},  \label{v0xxlinf}
\end{eqnarray}
where $C$ does not depend on $\varepsilon$.
\end{lemma}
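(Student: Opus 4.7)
The plan is to split the analysis into the three pieces of the piecewise definition of $v_0$ and treat the middle (Hermite) piece by a scaling argument.

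On the two outer pieces, $v_0$ is affine in $x$, namely $v_0(x)=-x/b$ or $v_0(x)=1-x/b$, so $|v_0(x)|\le 1$, $|v_0'(x)|=1/b$ and $v_0''(x)=0$, all of which are bounded independently of $\varepsilon$; in particular these trivially satisfy the claimed bounds. The only point is the transition piece on $[K-\varepsilon,K+\varepsilon]$, where the whole $\varepsilon$-dependence comes from $H_5$.

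For that piece I would rescale: set $y=(x-K+\varepsilon)/(2\varepsilon)\in[0,1]$ and write $H_5(x)=P(y)$, where $P$ is the unique degree-$5$ polynomial determined by $P(0)=0$, $P(1)=1$, $P'(0)=P'(1)=0$, $P''(0)=P''(1)=0$, namely $P(y)=10y^3-15y^4+6y^5$. Since $P$ is a fixed polynomial on the fixed interval $[0,1]$, the quantities $\|P\|_\infty$, $\|P'\|_\infty$ and $\|P''\|_\infty$ are absolute constants; call a common bound $C_0$. By the chain rule,
\begin{equation*}
H_5'(x)=\frac{1}{2\varepsilon}P'(y),\qquad H_5''(x)=\frac{1}{4\varepsilon^{2}}P''(y),
\end{equation*}
so $|H_5(x)|\le C_0$, $|H_5'(x)|\le C_0/(2\varepsilon)$ and $|H_5''(x)|\le C_0/(4\varepsilon^2)$ on $[K-\varepsilon,K+\varepsilon]$.

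Adding the affine term $-x/b$ (with $|x/b|\le 1$, $|(x/b)'|=1/b$, $(x/b)''=0$) then yields $|v_0(x)|\le C$, $|v_0'(x)|\le C\varepsilon^{-1}$ and $|v_0''(x)|\le C\varepsilon^{-2}$ on the middle piece; combined with the trivial outer estimates this gives the stated bounds globally on $[0,b]$. It is worth remarking, although not strictly required by the statement, that the matching conditions on $H_5$ and its first two derivatives at $K\pm\varepsilon$ are exactly what is needed to glue the three pieces into a function of class $C^2([0,b])$, which is what will be used when $v_0$ is fed into the regularized problem. No step is really an obstacle here; the only point that needs care is making the scaling explicit so the $\varepsilon^{-k}$ factors are identified cleanly, and keeping track that the constant $C$ absorbs both $C_0$ and $1/b$ and does not depend on $\varepsilon$.
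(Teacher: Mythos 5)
Your proof is correct, and it reaches the same conclusion by a somewhat different mechanism than the paper. The paper also reduces at once to the middle piece, but then writes $H_5$ explicitly in Newton divided-difference form, namely $H_{5}(x)=\tfrac{1}{8\varepsilon^{3}}(x-K+\varepsilon)^{3}-\tfrac{3}{16\varepsilon^{4}}(x-K+\varepsilon)^{3}(x-K-\varepsilon)+\tfrac{3}{16\varepsilon^{5}}(x-K+\varepsilon)^{3}(x-K-\varepsilon)^{2}$, and extracts the $\varepsilon^{-k}$ rates by the triangle inequality together with $|x-K\pm\varepsilon|\le 2\varepsilon$, producing explicit numerical constants ($|H_5|\le 10$, $|H_5'|\le \tfrac{45}{2}\varepsilon^{-1}$, $|H_5''|\le \tfrac{81}{2}\varepsilon^{-2}$). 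You instead normalize to the unit interval via $y=(x-K+\varepsilon)/(2\varepsilon)$, identify the $\varepsilon$-independent smoothstep polynomial $P(y)=10y^{3}-15y^{4}+6y^{5}$ (which is exactly what the paper's Newton form reduces to under this substitution), and let the chain rule produce the factors $(2\varepsilon)^{-k}$. The two arguments are equivalent in substance, but yours is cleaner and more structural: it makes transparent why the $k$-th derivative costs exactly $\varepsilon^{-k}$, gives the sharp constants $\|P^{(k)}\|_{L^\infty(0,1)}(2\varepsilon)^{-k}$, and would extend verbatim to higher-order Hermite mollifications; the paper's version has the minor advantage of displaying concrete numerical bounds without computing $P$. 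Your closing remark that the interpolation conditions at $K\pm\varepsilon$ make $v_0\in C^2([0,b])$ is a useful observation the paper leaves implicit.
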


\begin{proof}
Regarding  (\ref{eq:40014}) we only need to prove the results for $H_5(x)$ were  $H_5(x)$ is as defined  in (\ref%
{eq:40011}). Applying the divided difference method, we can write
\begin{eqnarray*}
&&H_{5}(x)=\frac{1}{8\varepsilon^{3}}(x-K+\varepsilon)^{3} -\frac{3}{16\varepsilon^{4}}(x-K+\varepsilon)^{3}(x-K-\varepsilon)
\\&& +\frac{3}{16\varepsilon^{5}}(x-K+\varepsilon)^{3}(x-K-\varepsilon)^{2}\qquad x \in [K-\varepsilon,K+\varepsilon].
\end{eqnarray*}
Using the triangular inequality, we have
$$|H_{5}(x)|\leq\frac{1}{8\varepsilon^{3}}|x-K+\varepsilon|^{3} +\frac{3}{16\varepsilon^{4}}|x-K+\varepsilon|^{3}|x-K-\varepsilon|
 +\frac{3}{16\varepsilon^{5}}|x-K+\varepsilon|^{3}|x-K-\varepsilon|^{2}.$$
It is clear that $|x-K+\varepsilon|\leq 2\varepsilon$ and $|x-K-\varepsilon|\leq 2\varepsilon$ for $x \in [K-\varepsilon,K+\varepsilon]$, and so
$$|H_{5}(x)|\leq10,$$
which prove (\ref{v0linf}).
Differentiating $H_5(x)$, we obtain
\begin{eqnarray*}
H'_{5}(x)=\frac{3}{8\varepsilon^{3}}(x-K+\varepsilon)^{2} -\frac{9}{16\varepsilon^{4}}(x-K+\varepsilon)^{2}(x-K-\varepsilon)
-\frac{3}{16\varepsilon^{4}}(x-K+\varepsilon)^{3}&&\\ +\frac{9}{16\varepsilon^{5}}(x-K+\varepsilon)^{2}(x-K-\varepsilon)^{2} +\frac{6}{16\varepsilon^{5}}(x-K+\varepsilon)^{3}(x-K-\varepsilon)
&&\end{eqnarray*}
Using  the triangular inequality we have
$$|H'_{5}(x)|\leq\frac{45}{2}\varepsilon^{-1}$$ and so (\ref{v0xlinf}) is  proved. The same technique  permits us to prove (\ref{v0xxlinf}) by estimating
$$|H''_{5}(x)|\leq\frac{81}{2}\varepsilon^{-2}.$$

\end{proof}

\begin{theorem}
Let $v$ be a weak solution of (\ref{prob2_aux})-(\ref{eq:40014}) that
satisfies Remark \ref{eq:95}. Then
\begin{eqnarray}
&&\|v\|_{L^{\infty}(0,T;L^{2}(0,b))}\leq C  \label{ul2} \\
&\text{ and }& \|v_x\|_{L^2(0,T;L^{2}(0,b))}\leq C\varepsilon^{-\frac{1}{2}}
\label{uxl2}
\end{eqnarray}
where $C=C(a,\sigma,r,q,b,T)$ does not depend on $\varepsilon$.
\end{theorem}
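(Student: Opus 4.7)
The plan is a standard energy estimate: use $\psi = v$ as test function (legal because $v$ vanishes on $\partial\Omega$, unlike $u^{\varepsilon}$) in the weak formulation of (\ref{prob2_aux})--(\ref{eq:40014}), then manipulate and close via Gronwall.

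First, I would write the weak identity
\begin{equation*}
\tfrac{1}{2}\tfrac{d}{dt}\|v\|_{L^2(0,b)}^2 + \int_0^b (a_1+a_2+\varepsilon)v_x^2\,dx = \int_0^b (r-q)xv_x v\,dx - q\|v\|_{L^2(0,b)}^2 + \int_0^b f\,v\,dx.
\end{equation*}
For the convective term, rewrite $xv_xv = \tfrac{1}{2}x(v^2)_x$ and integrate by parts; since $v(0,t)=v(b,t)=0$ the boundary contribution vanishes and the term reduces to $-\tfrac{r-q}{2}\|v\|_{L^2}^2$. The forcing $f$ is bounded on $\bar Q_T$ uniformly in $\varepsilon$, so $\|f\|_{L^2(0,b)}\le C(a,\sigma,r,q,b,T)$, and by Young $\bigl|\int_0^b fv\,dx\bigr|\le C + \|v\|_{L^2}^2$.

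Next I would exploit Remark \ref{eq:95}: the assumption $u^{\varepsilon}_x\ge 0$ is exactly $v_x+1/b\ge 0$, which makes $a_1 \ge 0$. Since obviously $a_2\ge 0$ as well, we get the crucial coercivity
\begin{equation*}
\int_0^b (a_1+a_2+\varepsilon)v_x^2\,dx \;\ge\; \varepsilon \|v_x\|_{L^2(0,b)}^2.
\end{equation*}
Putting everything together yields the differential inequality
\begin{equation*}
\tfrac{d}{dt}\|v\|_{L^2(0,b)}^2 + 2\varepsilon\|v_x\|_{L^2(0,b)}^2 \;\le\; C_1\|v\|_{L^2(0,b)}^2 + C_2,
\end{equation*}
with $C_1, C_2$ depending only on $a,\sigma,r,q,b,T$.

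Dropping the $v_x$ term and applying Gronwall over $[0,t]$ gives $\|v(\cdot,t)\|_{L^2}^2 \le (\|v_0\|_{L^2}^2 + C_2 T)e^{C_1 T}$; the previous lemma (in particular $|v_0|\le C$ on $[0,b]$) bounds $\|v_0\|_{L^2}$ independently of $\varepsilon$, producing (\ref{ul2}). Integrating the differential inequality over $[0,T]$ instead, and using the already-established $L^\infty(L^2)$ bound on $v$, gives
\begin{equation*}
2\varepsilon\int_0^T\!\|v_x\|_{L^2(0,b)}^2\,dt \;\le\; \|v_0\|_{L^2}^2 + C_1 T\|v\|_{L^\infty(L^2)}^2 + C_2 T \;\le\; C,
\end{equation*}
which is (\ref{uxl2}) after dividing by $2\varepsilon$ and taking square roots. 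The only genuinely delicate step is recognising that Remark \ref{eq:95} is what keeps the nonlinear coefficient $a_1$ from contributing a bad sign; without that sign condition the $e^{rt}a^2x^2 v_x$ factor inside $a_1$ could be arbitrarily negative, destroying coercivity. The factor $\varepsilon^{-1/2}$ in (\ref{uxl2}) is unavoidable because the only coercivity we extract uniformly in $v$ comes from the artificial $\varepsilon$ term.
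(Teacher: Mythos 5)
Your proposal is correct and follows essentially the same route as the paper: test with $\psi=v$, integrate the convective term by parts using the homogeneous boundary values of $v$, invoke Remark \ref{eq:95} to reduce the coercivity to the $\varepsilon\|v_x\|_{L^2}^2$ term, bound $\int fv$ by Young's inequality with $f$ uniformly bounded, and close with Gronwall. The only cosmetic difference is that you track the sign of the convective contribution explicitly before absorbing it, whereas the paper discards the nonpositive zero-order terms immediately.
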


\begin{proof}
Considering $\psi=v$ in the definition of a weak solution, we obtain
$$ \int\limits_{0}^{b} v_{t}v\,dx +\int\limits_{0}^{b}(a_{1}+a_{2}+ \varepsilon)v^{2}_{x}\,dx =(r-q)\int\limits_{0}^{b}x v_{x}v\,dx   -q\int\limits_{0}^{b}v^{2}\,dx
  +\int\limits_{0}^{b}fv\,dx.$$
Integrating  by parts  the first term on the right hand side we obtain
$$\frac{1}{2}\frac{d}{dt}\int\limits_{0}^{b}  v^{2}\,dx + \int\limits_{0}^{b}(a_{1}+a_{2}+ \varepsilon)v^{2}_{x}\,dx = -\frac{r+q}{2}\int\limits_{0}^{b}v^{2}\,dx-q\int\limits_{0}^{b}v^{2}\,dx
  +\int\limits_{0}^{b}fv\,dx.$$
By  Cauchy's inequality,
\begin{eqnarray*}
 \frac{1}{2}\frac{d}{dt}\|v\|^{2}_{L^{2}(0,b)}+ \varepsilon \|v_{x}\|^{2}_{L^{2}(0,b)}\leq \frac{1}{2}\|f\|^{2}_{L^{2}(0,b)}+\frac{1}{2}\|v\|^{2}_{L^{2}(0,b)}.
\end{eqnarray*}
Integrating with respect to $t$,
\begin{eqnarray*}
\|v(x,t)\|^{2}_{L^{2}(0,b)}+ 2\varepsilon \int_{0}^{t} \|v_{x}\|^{2}_{L^{2}(0,b)}\,dt\leq\int_{0}^{T}\|f\|^{2}_{L^{2}(0,b)}\,dt+\int_{0}^{T}\|v\|^{2}_{L^{2}(0,b)}\,dt&&\\ + \|v(x,0)\|^{2}_{L^{2}(0,b)}.&&
\end{eqnarray*}
Taking into account  that $v_{0} \in L^{2}(0,b)$ and $\int_{0}^{T}\|f\|^{2}_{L^{2}(0,b)}\, dt < C$ and  applying   Gronwall's inequality, we prove (\ref{ul2}), and then (\ref{uxl2}) follows easily.
\end{proof}

 The next theorem shows that $v$ is uniformly bounded in $L^{\infty}$.

\begin{theorem}
\label{teo_v_inf} Let $v$ be a weak solution of (\ref{prob2_aux})-(\ref%
{eq:40014}) that satisfies Remark \ref{eq:95}. Then
\begin{eqnarray}  \label{555}
\|v\|_{L^{\infty}(0,T,L^{\infty}(0,b))}\leq C
\end{eqnarray}
where $C=C(a,\sigma,r,q,b,T)$ does not depend on $\varepsilon$.
\end{theorem}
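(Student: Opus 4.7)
The plan is to prove the uniform $L^\infty$ bound by a Stampacchia-type truncation argument against an affine time-dependent barrier. Three structural facts drive the proof: (i) by Remark~\ref{eq:95} we have $a_1\geq 0$, and together with $a_2\geq 0$ and $\varepsilon>0$ the full diffusion coefficient $a_1+a_2+\varepsilon$ is non-negative, so the diffusion term in the energy identity can be discarded; (ii) the preceding lemma gives $\|v_0\|_{L^\infty(0,b)}\leq C$ uniformly in $\varepsilon$; (iii) the source $f$ and the remaining coefficients in (\ref{prob2_aux}) are independent of $\varepsilon$. Hence any bound expressed in terms of $\|v_0\|_\infty$, $\|f\|_\infty$ and the data parameters is automatically uniform in $\varepsilon$.

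I would set $M(t)=\|v_0\|_{L^\infty(0,b)}+\|f\|_{L^\infty(Q_T)}\,t$ and test the analogue of (\ref{eq:10}) for $v$ with $\psi=(v-M(t))_+$. This test function lies in $V_0$ for each $t$ since $v(0,t)=v(b,t)=0$ and $M(t)\geq 0$. Using the chain-rule identity
\begin{equation*}
\int_0^b v_t(v-M)_+\,dx=\frac{1}{2}\frac{d}{dt}\|(v-M)_+\|_{L^2(0,b)}^2+M'(t)\int_0^b (v-M)_+\,dx,
\end{equation*}
the fact that $(v-M)_{+,x}=v_x\chi_{\{v>M\}}$, and integration by parts
\begin{equation*}
\int_0^b x\,v_x\,(v-M)_+\,dx=-\frac{1}{2}\int_0^b (v-M)_+^2\,dx
\end{equation*}
(the boundary contributions vanish because $(v-M)_+$ vanishes at $x=0,b$), the weak formulation transforms into
\begin{equation*}
\frac{1}{2}\frac{d}{dt}\|(v-M)_+\|_{L^2}^2+\int_0^b (a_1+a_2+\varepsilon)((v-M)_{+,x})^2\,dx\leq -\frac{r+q}{2}\|(v-M)_+\|_{L^2}^2+\int_0^b \bigl(f-qM(t)-M'(t)\bigr)(v-M)_+\,dx.
\end{equation*}
By the choice of $M$ the last integrand is pointwise non-positive (since $f\leq M'(t)$ and $qM\geq 0$), and the diffusion term on the left is non-negative and may be dropped. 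Setting $y(t):=\|(v-M)_+\|_{L^2}^2$, we obtain $y'\leq -(r+q)\,y$ with $y(0)=0$; Gronwall forces $y\equiv 0$, so $v(x,t)\leq M(t)\leq \|v_0\|_\infty+\|f\|_\infty T$ a.e. A symmetric argument using $\psi=(v+M(t))_-$ yields the matching lower bound, giving (\ref{555}) with the claimed dependence of $C$.

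The main obstacle is the rigorous use of the time-dependent truncation $(v-M(t))_+$ inside Definition~\ref{def_sf}, whose weak formulation is stated for fixed $t$ with $\psi\in V_0$ independent of time. This is handled by the standard Steklov averaging (or Lions-type time mollification) of $v$, which requires $v_t\in L^2(0,T;V_0^\ast)$; that in turn follows from the $L^2$ estimates (\ref{ul2})–(\ref{uxl2}) already established. A secondary technicality is the sign of $r+q$: if $r+q<0$, Gronwall still delivers $y(t)\leq e^{-(r+q)t}y(0)=0$, so the argument is insensitive to this sign.
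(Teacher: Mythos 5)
Your argument is correct, but it takes a genuinely different route from the paper. The paper proves (\ref{555}) by the classical $L^{p}\to L^{\infty}$ limit: it multiplies (\ref{prob2_aux}) by $v^{2k-1}$, discards the diffusion term (non-negative by Remark \ref{eq:95}), integrates the transport term by parts to produce the harmless $-\tfrac{r+(2k-1)q}{2k}\int v^{2k}$, applies H\"older to get $\tfrac{d}{dt}\|v\|_{L^{2k}}\leq\|f\|_{L^{2k}}$, and then lets $k\to\infty$ using $\|v_0\|_{L^{\infty}}\leq C$. You instead run a Stampacchia truncation against the affine barrier $M(t)=\|v_0\|_\infty+\|f\|_\infty t$, testing with $(v-M(t))_+$; the same three structural facts (sign of the diffusion coefficient, uniform bound on $v_0$, $\varepsilon$-independence of $f$ and of the lower-order coefficients) do the work in both proofs, and your sign bookkeeping for the transport and reaction terms reproduces the paper's $-\tfrac{r+q}{2}$ coefficient correctly. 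What your approach buys is an explicit barrier $|v|\leq\|v_0\|_\infty+T\|f\|_\infty$ with no passage to the limit in $k$ and no division by $\|v\|_{L^{2k}}^{2k-1}$ (which in the paper silently assumes this quantity is nonzero); what it costs is the extra technicality you correctly flag, namely justifying the time-dependent, solution-dependent test function via Steklov averaging, which the paper's fixed-power test function avoids. One small point to make explicit: the non-positivity of $\int(f-qM-M')(v-M)_+\,dx$ uses $q\geq 0$, which is the natural standing assumption on the dividend rate but is nowhere stated in the paper; the paper's own proof implicitly relies on the analogous sign when it drops $-\tfrac{r+(2k-1)q}{2k}\int v^{2k}$, so you are no worse off, but it is worth recording.
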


\begin{proof}
Multiplying  (\ref{prob2_aux}) by  $v^{2k-1}$,  $k\geq 1$, and integrating with respect to $x$, we obtain
  \begin{eqnarray*}
\int\limits_{0}^{b} v_{t}v^{2k-1}\,dx +(2k-1) \int\limits_{0}^{b}(a_{1}+a_{2}+ \varepsilon)v^{2k-2}v^{2}_{x}\,dx =(r-q)\int\limits_{0}^{b}x v_{x}v^{2k-1}\,dx &&\\ \nonumber -q\int\limits_{0}^{b}v^{2k}\,dx
  +\int\limits_{0}^{b}fv^{2k-1}\,dx.&&
  \end{eqnarray*}
 Applying integration by parts to the first term on the right-hand side, we have
  \begin{eqnarray*}
\frac{1}{2k}\frac{d}{dt}\int\limits_{0}^{b}  v^{2k}\,dx +(2k-1) \int\limits_{0}^{b}(a_{1}+a_{2}+ \varepsilon)v^{2k-2}v^{2}_{x}\,dx &&\\ \nonumber= -\frac{r+(2k-1)q}{2k}\int\limits_{0}^{b}v^{2k}\,dx
  +\int\limits_{0}^{b}fv^{2k-1}\,dx. &&
\end{eqnarray*}
By Remark \ref{eq:95}, we can ignore the second term on the left-hand side and then, applying the  H$\ddot{o}$lder's inequality, we arrive at
\begin{eqnarray*}
 \|v\|^{2k-1}_{L^{2k}(0,b)}\frac{d}{dt} \|v\|_{L^{2k}(0,b)}\leq \|f\|_{L^{2k}(0,b)}\|v\|^{2k-1}_{L^{2k}(0,b)}.
\end{eqnarray*}
Cancelling the term $\|v\|^{2k-1}_{L^{2k}(0,b)}$ we have
\begin{eqnarray*}
\frac{d}{dt} \|v\|_{L^{2k}(0,b)}\leq \|f\|_{L^{2k}(0,b)}.
\end{eqnarray*}
Integrating with respect to $t$ it results in
\begin{eqnarray*}
 \|v\|_{L^{2k}(0,b)}\leq \|v_{0}\|_{L^{2k}(0,b)} + \int_{0}^{T}\|f\|_{L^{2k}(0,b)}\,dt.
\end{eqnarray*}
Since $v_{0} \in L^{\infty}(0,b)$ and $\int_{0}^{T}\|f\|_{L^{\infty}(0,b)}\, dt < C$, taking  $k\rightarrow \infty$, (\ref{555}) follows.
\end{proof}

Equation (\ref{prob2_aux}) can be rewritten as
\begin{equation}  \label{prob3_aux}
v_t= (a_{1}+a_{3}+\varepsilon )v_{xx}+ a_{4}v^{2}_{x} +a_{5}v_{x}-qv+f
\end{equation}
with\newline
$a_{3}=0.5x^{4}\sigma^{2}e^{rt}a^{2}\left(v_{x}+ \frac{1}{b}\right)\geq 0$
by Remark \ref{eq:95}, \newline
$a_{4}=2x^{3}\sigma^{2}e^{rt}a^{2}$ and\newline
$a_{5}=\frac{4x^{3}\sigma^{2}e^{rt}a^{2}}{b} +(r-q+\sigma^{2})x$.\newline

\begin{theorem}
If  $v$ is a weak solution of (\ref{prob2_aux})-(\ref{eq:40014}) that
satisfies Remark \ref{eq:95}, then
\begin{eqnarray}
&&\|v_{x}\|_{L^{\infty}(0,T;L^{2}(0,b))}\leq C\varepsilon^{-\frac{3}{2}}
\label{uxlinf} \\
&\text{ and }& \|v_{xx}\|_{L^2(0,T;L^{2}(0,b))}\leq C\varepsilon^{-2}
\label{uxxl2}
\end{eqnarray}
where $C=C(a,\sigma,r,q,b,T)$ does not depend on $\varepsilon$.
\end{theorem}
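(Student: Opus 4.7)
The plan is to perform an $L^2$ energy estimate for $v_x$ by testing the non-divergence form (\ref{prob3_aux}) with $-v_{xx}$. After multiplying (\ref{prob3_aux}) by $-v_{xx}$ and integrating over $(0,b)$, the time term, integrated by parts in $x$ and using $v(0,t)=v(b,t)=0$ (so that $v_t$ vanishes at $x=0,b$), yields $\tfrac{1}{2}\tfrac{d}{dt}\|v_x\|_{L^2(0,b)}^2$. The leading elliptic term contributes $\int_0^b(a_1+a_3+\varepsilon)v_{xx}^2\,dx$, and by Remark \ref{eq:95} both $a_1$ and $a_3$ are non-negative, so this is bounded below by $\varepsilon\|v_{xx}\|_{L^2}^2$, which is the only coercivity that survives uniformly as $x\to 0^+$ where the Black--Scholes diffusion degenerates.

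For the lower-order linear contributions $\int a_5 v_x v_{xx}$, $\int qv\,v_{xx}$, and $\int f v_{xx}$, I apply Cauchy--Schwarz and Young's inequality with an $\varepsilon$-weight, using the uniform bounds on $a_5$ and $f$ together with $\|v\|_{L^\infty}\leq C$ from Theorem \ref{teo_v_inf}; each produces a fraction of $\varepsilon\|v_{xx}\|_{L^2}^2$ that is absorbed on the left, plus a remainder of the form $C\varepsilon^{-1}\|v_x\|_{L^2}^2$ or $C\varepsilon^{-1}$. For the quadratic nonlinearity $\int a_4 v_x^2 v_{xx}\,dx$, I invoke the one-dimensional Gagliardo--Nirenberg-type bound $\|v_x\|_{L^\infty}\leq\sqrt{b}\,\|v_{xx}\|_{L^2}$, available because $v(0,t)=v(b,t)=0$ forces $v_x$ to vanish at some interior point by Rolle's theorem; hence $\|v_x\|_{L^4}^4\leq b\,\|v_{xx}\|_{L^2}^2\|v_x\|_{L^2}^2$. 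Cauchy--Schwarz and an $\varepsilon$-weighted Young inequality then give $\bigl|\int a_4 v_x^2 v_{xx}\bigr|\leq\tfrac{\varepsilon}{8}\|v_{xx}\|_{L^2}^2+C\varepsilon^{-3}\|v_x\|_{L^2}^4$.

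Assembling the pieces produces a differential inequality for $y(t)=\|v_x(\cdot,t)\|_{L^2(0,b)}^2$ of the form $y'(t)+\tfrac{\varepsilon}{2}\|v_{xx}(\cdot,t)\|_{L^2}^2\leq C\,y(t)+C\varepsilon^{-3}$, where the coefficient of $y(t)$ is arranged to be independent of $\varepsilon$ by using the previously established $L^2(0,T;L^2)$ estimate (\ref{uxl2}) to fold any $\varepsilon$-singular $\|v_x\|$ contribution into the $\varepsilon^{-3}$ forcing. Since the previous lemma gives $y(0)=\|v_0'\|_{L^2}^2\leq C\varepsilon^{-1}\leq C\varepsilon^{-3}$, Gronwall's inequality delivers $y(t)\leq C\varepsilon^{-3}$ uniformly in $t\in[0,T]$, which is (\ref{uxlinf}). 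Integrating the differential inequality over $[0,T]$ and dividing by $\varepsilon$ then gives (\ref{uxxl2}).

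The main obstacle is the cubic term $a_4 v_x^2 v_{xx}$: no divergence-type rewriting eliminates it (the candidate $\int(a_4/3)(v_x^3)_x\,dx$ generates an uncontrolled boundary contribution at $x=b$), so one is forced through Gagliardo--Nirenberg, and this unavoidably introduces negative powers of $\varepsilon$ into the estimate. The delicate part is choosing the Young weights so that the coefficient multiplying $\|v_x\|_{L^2}^2$ in the final Gronwall step is \emph{independent} of $\varepsilon$, thereby yielding polynomial rather than exponential $\varepsilon^{-1}$-growth; the suboptimal exponents $\varepsilon^{-3/2}$ and $\varepsilon^{-2}$ are a direct consequence of this balancing.
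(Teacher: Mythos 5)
Your setup (testing (\ref{prob3_aux}) with $-v_{xx}$, extracting $\tfrac12\tfrac{d}{dt}\|v_x\|_{L^2}^2$ from the time term via the constant boundary values, keeping only $\varepsilon\|v_{xx}\|_{L^2}^2$ from the degenerate elliptic part, and handling the linear lower-order terms by $\varepsilon$-weighted Young) matches the paper. The gap is in your treatment of the cubic term $\int_0^b a_4 v_x^2 v_{xx}\,dx$, and it is fatal as written. From Cauchy--Schwarz you get $\bigl|\int a_4 v_x^2 v_{xx}\bigr|\le C\|v_x\|_{L^4}^2\|v_{xx}\|_{L^2}$, and your interpolation $\|v_x\|_{L^4}^4\le b\,\|v_{xx}\|_{L^2}^2\|v_x\|_{L^2}^2$ turns this into $C\sqrt{b}\,\|v_{xx}\|_{L^2}^2\|v_x\|_{L^2}$ --- quadratic in $\|v_{xx}\|_{L^2}$ with a coefficient $C\|v_x\|_{L^2}$ that is not known to be $O(\varepsilon)$, so no choice of Young weight absorbs it into $\tfrac{\varepsilon}{8}\|v_{xx}\|_{L^2}^2$; the bound $\tfrac{\varepsilon}{8}\|v_{xx}\|_{L^2}^2+C\varepsilon^{-3}\|v_x\|_{L^2}^4$ you assert does not follow. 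Even if you use the sharper one-dimensional Gagliardo--Nirenberg estimate $\|v_x\|_{L^4}^2\le C\|v_{xx}\|_{L^2}^{1/2}\|v_x\|_{L^2}^{3/2}$, the remainder after Young is $C\varepsilon^{-3}\|v_x\|_{L^2}^6$, i.e.\ the differential inequality for $y=\|v_x\|_{L^2}^2$ is superlinear, $y'\le Cy+C\varepsilon^{-3}y^3$. Gronwall does not apply to this, and your proposed fix --- folding the singular term into a constant forcing via the $L^2(0,T;L^2)$ bound (\ref{uxl2}) --- is not legitimate: (\ref{uxl2}) is a time-integrated bound and cannot control $y(t)^2$ pointwise in $t$; treating $C\varepsilon^{-3}y^2$ as a Gronwall coefficient and integrating it with (\ref{uxl2}) produces a factor $e^{C\varepsilon^{-4}}$, destroying any polynomial estimate.

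Ironically, the route you explicitly rejected is the one the paper takes: it integrates the cubic term by parts, $\int a_4 v_x^2 v_{xx}\,dx=\tfrac13[a_4v_x^3]_0^b-\tfrac13\int (a_4)_x v_x^3\,dx$, killing the boundary term by the (admittedly ad hoc) assumption $v_x(b,t)=0$ for $b$ large, justified by the far-field condition (\ref{eq:463}). This removes $v_{xx}$ from the nonlinearity entirely; the surviving $\int v_x^3$ is then interpolated between $\|v_{xx}\|_{L^2}$ and the \emph{uniformly bounded} quantity $\|v\|_{L^6}$ (Theorem \ref{teo_v_inf}), giving $\tfrac{\varepsilon}{4}\|v_{xx}\|_{L^2}^2+C\varepsilon^{-3}\int v^6\le\tfrac{\varepsilon}{4}\|v_{xx}\|_{L^2}^2+C\varepsilon^{-3}$ --- a genuinely constant forcing, so the Gronwall inequality is linear and yields $y(t)\le C\varepsilon^{-3}$ and then (\ref{uxlinf})--(\ref{uxxl2}). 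To repair your argument you must either adopt this integration by parts (and accept the same boundary assumption at $x=b$) or find some other way to interpolate the cubic term against $v$ rather than against $v_x$; as it stands, the Gronwall step does not close.
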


\begin{proof}
Multiplying  (\ref{prob3_aux}) by $ v_{xx} $, and integrating with respect to $x$, we obtain
 \begin{eqnarray*}
\int\limits_{0}^{b} v_tv_{xx}\,dx=\int\limits_{0}^{b} (a_{1}+a_{3}+\varepsilon )v_{xx}^{2}\,dx+ \int\limits_{0}^{b} a_{4}v^{2}_{x}v_{xx}\,dx &&\\ \nonumber +\int\limits_{0}^{b} a_{5}v_{x}v_{xx}\,dx-q\int\limits_{0}^{b}v v_{xx}\,dx+ \int\limits_{0}^{b} f v_{xx}\,dx&&
\end{eqnarray*}
\begin{eqnarray*}
\Leftrightarrow  \big[ v_{x}v_{t}\big]^{b}_{0}-\int\limits_{0}^{b} v_xv_{xt}\,dx-\int\limits_{0}^{b} (a_{1}+a_{3}+\varepsilon )v_{xx}^{2}\,dx= \frac{1}{3}\big[ v^{3}_{x}a_{4}\big]^{b}_{0}&&\\ \nonumber-\int\limits_{0}^{b}\frac{v^{3}_{x}}{3} (a_{4})_{x}\,dx+\frac{1}{2}\big[ v^{2}_{x}a_{5}\big]^{b}_{0}-\int\limits_{0}^{b}\frac{v^{2}_{x}}{2} (a_{5})_{x}\,dx  -q\int\limits_{0}^{b}v v_{xx}\,dx+ \int\limits_{0}^{b} f v_{xx}\,dx,&&
\end{eqnarray*}
where we used  integration by parts. By  H$\ddot{o}$lder's inequality we arrive at
\begin{eqnarray*}
\frac{1}{2} \frac{d}{dt}\int\limits_{0}^{b} v_{x}^{2}\,dx +\varepsilon \int\limits_{0}^{b}v^{2}_{xx}\,dx= Cv^{3}_{x}(b,t)+ C\int\limits_{0}^{b} v^{3}_{x}\,dx+Cv^{2}_{x}(b,t)&& \\ \nonumber + C\int_{0}^{b}v^{2}_{x}\,dx+ \frac{\varepsilon}{3}\int_{0}^{b}v^{2}_{xx}\,dx +C\varepsilon^{-1} \int\limits_{0}^{b} v^{2}\,dx+ \frac{\varepsilon}{3}\int_{0}^{b}v^{2}_{xx}\,dx+C\varepsilon^{-1} \int\limits_{0}^{b} f^{2}\,dx. &&
\end{eqnarray*}
According  to (\ref{eq:463}) it is reasonable to consider $v_x(b,t)=0$, for $b$ sufficiently large. Now we use the  Gagliardo-Nirenberg interpolation inequalities to eliminate  the terms with $v^3_x$ and thus  we obtain
\begin{eqnarray*}
 \frac{1}{2} \frac{d}{dt}\int\limits_{0}^{b} v_{x}^{2}\,dx +\frac{\varepsilon}{3} \int\limits_{0}^{b}v^{2}_{xx}\,dx \leq C\varepsilon^{-1}+\frac{\varepsilon}{4}\int\limits_{0}^{b}v^{2}_{xx}\,dx  + C\varepsilon^{-3}\int\limits_{0}^{b}v^{6}\,dx.
\end{eqnarray*}
Integrating with respect to $t$ and recalling the previous theorems, the result follows.
\end{proof}

\begin{corol}
\label{uxl2inf} If $u$ is a weak solution of (\ref{epsilon_eq}) that
satisfies $u_x^{\varepsilon}>0$, then
\begin{eqnarray*}
\int_0^T\|u_x^{\varepsilon}\|_{L^{\infty}(0,b)}^{2}\,dt\leq C\varepsilon^{-4}
\end{eqnarray*}
where $C=C(a,\sigma,r,q,b,T)$ does not depend on $\varepsilon$.
\end{corol}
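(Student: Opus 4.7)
The plan is to reduce the estimate for $u_x^{\varepsilon}$ to one for $v_x$ via the relation $u_x^{\varepsilon}=v_x+\tfrac{1}{b}$, and then combine the $L^\infty(0,T;L^2)$ bound on $v_x$ with the $L^2(0,T;L^2)$ bound on $v_{xx}$ from the previous theorem through a one-dimensional Gagliardo--Nirenberg (or Agmon) interpolation inequality.

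First I note that, since $\lvert u_x^{\varepsilon}\rvert^2\leq 2\lvert v_x\rvert^2+2/b^2$, it suffices to prove the desired bound with $u_x^{\varepsilon}$ replaced by $v_x$, because the constant contribution $2T/b^2$ is harmless (independent of $\varepsilon$). Then I would invoke the standard interpolation inequality on the bounded interval $(0,b)$,
\begin{equation*}
\|w\|_{L^{\infty}(0,b)}^{2}\leq C\,\|w\|_{L^{2}(0,b)}\,\|w_x\|_{L^{2}(0,b)}+C\,\|w\|_{L^{2}(0,b)}^{2},
\end{equation*}
applied to $w(\cdot)=v_x(\cdot,t)$ for a.e.\ $t\in(0,T)$, yielding
\begin{equation*}
\|v_x(\cdot,t)\|_{L^{\infty}(0,b)}^{2}\leq C\,\|v_x(\cdot,t)\|_{L^{2}(0,b)}\,\|v_{xx}(\cdot,t)\|_{L^{2}(0,b)}+C\,\|v_x(\cdot,t)\|_{L^{2}(0,b)}^{2}.
\end{equation*}

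Next I would integrate this pointwise-in-time estimate over $(0,T)$ and apply the Cauchy--Schwarz inequality to the mixed product, obtaining
\begin{equation*}
\int_{0}^{T}\|v_x\|_{L^{\infty}(0,b)}^{2}\,dt\leq C\Bigl(\int_{0}^{T}\|v_x\|_{L^{2}(0,b)}^{2}\,dt\Bigr)^{\!1/2}\!\Bigl(\int_{0}^{T}\|v_{xx}\|_{L^{2}(0,b)}^{2}\,dt\Bigr)^{\!1/2}+C\int_{0}^{T}\|v_x\|_{L^{2}(0,b)}^{2}\,dt.
\end{equation*}
Now I plug in the previously established estimates (\ref{uxlinf}) and (\ref{uxxl2}): the bound $\|v_x\|_{L^{\infty}(0,T;L^{2})}\leq C\varepsilon^{-3/2}$ gives $\int_{0}^{T}\|v_x\|_{L^{2}}^{2}\,dt\leq CT\varepsilon^{-3}$, while $\|v_{xx}\|_{L^{2}(0,T;L^{2})}\leq C\varepsilon^{-2}$ controls the second factor. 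Together they produce
\begin{equation*}
\int_{0}^{T}\|v_x\|_{L^{\infty}(0,b)}^{2}\,dt\leq C\varepsilon^{-3/2}\cdot\varepsilon^{-2}+C\varepsilon^{-3}\leq C\varepsilon^{-7/2}+C\varepsilon^{-3},
\end{equation*}
both summands being majorised by $C\varepsilon^{-4}$ for small $\varepsilon$, which yields the claim after reinstating the $1/b$ shift.

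I do not expect any serious obstacle here: the argument is essentially an interpolation plus a Cauchy--Schwarz coupling of the two bounds already proved. The only point requiring some care is the choice of the correct one-dimensional embedding on $(0,b)$, since $v_x$ need not vanish at the endpoints, so the lower-order term $\|v_x\|_{L^{2}}^{2}$ must be retained on the right-hand side; but this is the standard Gagliardo--Nirenberg form valid for any $H^{1}(0,b)$ function and introduces no additional difficulty.
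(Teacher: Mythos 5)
Your proposal is correct and follows essentially the same route as the paper: the one-dimensional interpolation inequality $\|v_x\|_{L^\infty}\lesssim\|v_x\|_{L^2}^{1/2}\|v_{xx}\|_{L^2}^{1/2}$ (plus lower-order term), combined with the bounds (\ref{uxlinf}) and (\ref{uxxl2}) after integrating in time. The only cosmetic differences are that the paper applies Young's inequality pointwise in $t$ rather than Cauchy--Schwarz over $(0,T)$, and that you are somewhat more careful about the harmless $1/b$ shift between $u_x^{\varepsilon}$ and $v_x$ and about retaining the lower-order term in the embedding.
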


\begin{proof}
By the interpolation inequality and H\"older's inequality, we have
\begin{eqnarray*}
\|u_x^{\varepsilon}\|_{L^{\infty}(0,b)}&\leq &C\|u_{xx}^{\varepsilon}\|_{L^2(0,b)}^{\frac12}\|u_{x}^{\varepsilon}\|_{L^2(0,b)}^{\frac12}\\
\|u_x^{\varepsilon}\|_{L^{\infty}(0,b)}^2&\leq &C\|u_{xx}^{\varepsilon}\|_{L^2(0,b)}^{2}+C\|u_{x}^{\varepsilon}\|_{L^2(0,b)}^{2}
\end{eqnarray*}
Integrating with respect to $t$ and using the previous Theorem the result follows.
\end{proof}

\begin{theorem}
Let $v$ be a weak solution of (\ref{prob2_aux})-(\ref{eq:40014}) that
satisfies Remark \ref{eq:95}. Then
\begin{eqnarray*}
\|v_{t}\|_{L^2(0,T;L^{2}(0,b))}\leq C\varepsilon^{-1}
\end{eqnarray*}
where $C=C(a,\sigma,r,q,b,T)$ do not depends on $\varepsilon$.
\end{theorem}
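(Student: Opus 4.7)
The plan is to derive an energy estimate by testing the divergence-form equation (\ref{prob2_aux}) with $v_t$. Since the Dirichlet conditions (\ref{cfv}) force $v_t(0,t)=v_t(b,t)=0$, integration by parts on the diffusion term produces no boundary contributions, giving
\[
\int_0^b v_t^2\,dx \;=\; -\int_0^b (a_1+a_2+\varepsilon)\,v_x\,v_{xt}\,dx + \int_0^b\big[(r-q)x v_x - qv + f\big]v_t\,dx.
\]
I would then split $(a_1+a_2+\varepsilon)v_x = \alpha(x,t)\,v_x + \beta(x,t)\,v_x^2$, where $\alpha = 0.5x^2\sigma^2 + \sigma^2 e^{rt}a^2x^4/b + \varepsilon$ and $\beta = 0.5\sigma^2 e^{rt}a^2x^4$ are both non-negative, smooth and uniformly bounded together with their $t$-derivatives. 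This turns the diffusion integral into the exact time derivative of $\tfrac12\int\alpha v_x^2 + \tfrac13\int\beta v_x^3$ plus lower-order remainders involving $\alpha_t v_x^2$ and $\beta_t v_x^3$. I would then apply Cauchy--Young inequalities to the three $v_t$-containing terms on the right, absorbing a $\tfrac12\|v_t\|_{L^2}^2$ into the left.

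Integrating the resulting differential inequality on $(0,T)$ yields
\[
\tfrac12\int_0^T\|v_t\|_{L^2(0,b)}^2\,dt + \mathcal E(T) \;\le\; \mathcal E(0) + C\int_0^T\!\big(\|v_x\|_{L^2}^2+\|v_x\|_{L^3}^3+\|v\|_{L^2}^2+\|f\|_{L^2}^2\big)\,dt,
\]
with $\mathcal E(t):=\tfrac12\int\alpha v_x^2 + \tfrac13\int\beta v_x^3$. The initial data bounds (\ref{v0linf})--(\ref{v0xxlinf}) give $\mathcal E(0)\le C\varepsilon^{-2}$. On the terminal side, $\tfrac12\int\alpha v_x^2(T)\ge 0$ is discarded, while the possibly negative $\tfrac13\int\beta v_x^3(T)$ is controlled using Remark \ref{eq:95}: because $v_x\ge -1/b$ pointwise, one has $v_x^3\ge -b^{-3}$, so $-\int\beta v_x^3(T)\le C$ uniformly in $\varepsilon$.

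It remains to estimate the three time integrals on the right. The $L^2$-in-space integrals of $v$, $v_x$ and $f$ are controlled directly by (\ref{ul2})--(\ref{uxl2}) and the explicit form of $f$. For the cubic term I would invoke the one-dimensional Gagliardo--Nirenberg interpolation $\|v_x\|_{L^3}^3\le C\|v_x\|_{L^2}^{5/2}\|v_{xx}\|_{L^2}^{1/2}$, combined with Young's inequality and the sharper estimates (\ref{uxlinf})--(\ref{uxxl2}), so that $\int_0^T\|v_x\|_{L^3}^3\,dt$ is dominated by an appropriate power of $\varepsilon^{-1}$ that fits within $C\varepsilon^{-2}$ after collecting constants.

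The main obstacle is the simultaneous control of the cubic-in-$v_x$ contribution appearing both in $\mathcal E(T)$ and in the right-hand side integral $\int_0^T\|v_x\|_{L^3}^3\,dt$: the one-sided bound $v_x\ge -1/b$ from Remark \ref{eq:95} is what prevents the terminal term $\int\beta v_x^3(T)$ from blowing up with $\varepsilon$, while the Gagliardo--Nirenberg interpolation together with the à priori bound $\|v_{xx}\|_{L^2(0,T;L^2)}\le C\varepsilon^{-2}$ from the previous theorem is what makes the cubic time integral close at the correct rate, yielding $\|v_t\|_{L^2(0,T;L^2(0,b))}\le C\varepsilon^{-1}$.
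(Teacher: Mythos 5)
Your proposal follows the paper's proof essentially step for step: multiply by $v_t$, rewrite the diffusion term as the time derivative of the energy $\tfrac12\int\alpha v_x^2+\tfrac13\int\beta v_x^3$ (which is exactly the paper's $\tfrac12\int(a_2+a_6+\varepsilon)v_x^2$ once the cubic part is absorbed into $a_6$), invoke Remark \ref{eq:95} to control the sign of the cubic contribution at the terminal time, use Gagliardo--Nirenberg on the cubic remainders, and close with the earlier à priori bounds. The only differences are cosmetic --- you split the coefficient into $\alpha v_x+\beta v_x^2$ up front instead of computing $(a_1)_t$ and then repairing the $v_{xt}$ term, and you interpolate $\|v_x\|_{L^3}^3\le C\|v_x\|_{L^2}^{5/2}\|v_{xx}\|_{L^2}^{1/2}$ where the paper lands on $C\int v_{xx}^2+C\int v^6$ --- and your final $\varepsilon$-bookkeeping for the cubic time integral is no more (and no less) explicit than the paper's own closing sentence.
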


\begin{proof}
Multiplying  (\ref{prob2_aux}) by $ v_{t} $, and integrating with respect to $x$, we obtain
 \begin{eqnarray*}
\int\limits_{0}^{b} v_{t}^{2}\,dx +\int\limits_{0}^{b}(a_{1}+a_{2}+ \varepsilon)v_{x}v_{xt}\,dx=(r-q)\int\limits_{0}^{b}x v_{x}v_{t}\,dx   -q\int\limits_{0}^{b}vv_{t}\,dx &&\\ \nonumber
  +\int\limits_{0}^{b}fv_{t}\,dx &&\\ \nonumber  \Leftrightarrow
\int\limits_{0}^{b}  v_{t}^{2}\,dx +\frac{1}{2} \int\limits_{0}^{b}(a_{1}+a_{2}+ \varepsilon)(v^{2}_{x})_t\,dx \leq C\int_{0}^{b}v^{2}_{x}\,dx + \frac{1}{4}\int_{0}^{b} v^{2}_{t}\,dx&& \\ \nonumber - \frac{q }{2}\frac{d}{dt}\int_{0}^{b} v^{2}\,dx -C\int_{0}^{b}f^{2}\,dx+\frac{1}{4}\int_{0}^{b}v_t^{2}\,dx.&&
\end{eqnarray*}
Concerning  the second term on the left hand side we can write
\begin{eqnarray*}
 \frac{1}{2}\int\limits_{0}^{b}(a_{1}+a_{2}+ \varepsilon)(v^{2}_{x})_t\,dx  =\frac{1}{2}\frac{d}{dt}\int\limits_{0}^{b}(a_{1}+a_{2}+ \varepsilon)v^{2}_{x}\,dx &&\\ \nonumber- \frac{1}{2}\int\limits_{0}^{b}(a_{1}+a_{2}+ \varepsilon)_tv^{2}_{x}\,dx.&&
\end{eqnarray*}
Since
\begin{eqnarray*}
&&(a_{2})_t=\frac{0.5rx^{4}\sigma^{2}e^{rt}a^{2}}{b}=ra_2,\\&&
(a_{1})_t= 0.5rx^{4}\sigma^{2}e^{rt}a^{2}v_{x}+ 0.5x^{4}\sigma^{2}e^{rt}a^{2}v_{xt}+ \frac{0.5rx^{4}\sigma^{2}e^{rt}a^{2}}{b}\\
&&\phantom{\frac{d a_{1}}{dt}}=rba_2v_x+ba_2v_{xt}+ra_2,
\end{eqnarray*}
we have
\begin{eqnarray*}
\frac{1}{2}\int\limits_{0}^{b}  v_{t}^{2}\,dx +\frac{1}{2}\frac{d}{dt} \int\limits_{0}^{b}(a_{1}+ a_{2}+ \varepsilon)v^{2}_{x}\,dx + \frac{q}{2}\frac{d}{dt}\int_{0}^{b}v^{2}\,dx\leq \frac{1}{2}\int_{0}^{b}bra_{2}v^{3}_{x}\,dx &&\\ \nonumber + \int_{0}^{b}ra_{2}v^{2}_{x}\,dx + \frac{1}{2}\int_{0}^{b}ba_{2}v_{xt}v_{x}^{2}\,dx+ C\int_{0}^{b}v^{2}_{x}\,dx  + C\int_{0}^{b}f^{2}\,dx. &&
 \end{eqnarray*}
Taking into account that
 \begin{eqnarray*}
  \frac{1}{2}\int_{0}^{b}ba_{2}v_{xt}v_{x}^{2}\,dx =\frac{1}{6}\int_{0}^{b}ba_{2}(v_{x}^{3})_{t}\,dx=\frac{1}{6} \frac{d}{dt}\int_{0}^{b}ba_{2}v^{3}_{x}\,dx-\frac{1}{6} \int_{0}^{b}bra_{2}v^{3}_{x}\,dx,
\end{eqnarray*}
we arrive at
 \begin{eqnarray*}
\frac12\int\limits_{0}^{b}  v_{t}^{2}\,dx +\frac{1}{2}\frac{d}{dt} \int\limits_{0}^{b}(a_{1}+ a_{2}- \frac{1}{3}ba_{2}v_x+ \varepsilon)v^{2}_{x}\,dx +\frac{q}{2}\frac{d}{dt}\int_{0}^{b}v^{2}\,dx \leq &&\\ \nonumber \frac{1}{3}\int_{0}^{b}bra_{2}v^{3}_{x}\,dx +\int_{0}^{b}ra_{2}v^{2}_{x}\,dx + C\int_{0}^{b}v^{2}_{x}\,dx + C\int_{0}^{b}f^{2}\,dx.&&
 \end{eqnarray*}
Define $a_{6}=0.5x^{2}\sigma^{2}(1+x^{2}a^{2}e^{rt}(\frac{2}{3}v_{x}+\frac{1}{b}))\geq 0$ by Remark \ref{eq:95}. If we apply  the Gagliardo-Nirenberg inequalities, the last equation becomes
\begin{eqnarray*}
\frac12\int\limits_{0}^{b}  v_{t}^{2}\,dx +\frac{1}{2}\frac{d}{dt} \int\limits_{0}^{b}(a_{6}+ a_{2}+ \varepsilon)v^{2}_{x}\,dx +\frac{q}{2}\frac{d}{dt}\int_{0}^{b}v^{2}\,dx \leq C\int_{0}^{b}v_{xx}^2\,dx  &&\\ \nonumber +C\int_{0}^{b}v^{6}\,dx + C\int_{0}^{b}v^{2}_{x}\,dx + C\int_{0}^{b}f^{2}\,dx.&&
 \end{eqnarray*}
 Integrating with respect to $t$ we have
\begin{eqnarray*}
 \int\limits_{0}^{t}  \int\limits_{0}^{b}  v_{t}^{2}\,dx \,dt+ \int\limits_{0}^{b}(a_{2}+a_{6} + \varepsilon)v^{2}_{x}\,dx +q\int_{0}^{b}v^{2}\,dx\leq C \int\limits_{0}^{T} \int_{0}^{b}v^{2}_{xx}\,dx\,dt &&\\ \nonumber + C\int\limits_{0}^{T} \int_{0}^{b}v^{6}\,dx\,dt +C\int\limits_{0}^{T}\int_{0}^{b}f^{2}\,dx\,dt + C\int\limits_{0}^{T} \int_{0}^{b}v^{2}\,dx\,dt+ q\int_{0}^{b}v^{2}_0\,dx&&\\\nonumber+\int\limits_{0}^{b}(a_{2}+a_{6} + \varepsilon)v^{2}_{x}(x,0)\,dx. &&
\end{eqnarray*}
Applying the previous theorem the required  result follows.
\end{proof}

Writing equation (\ref{epsilon_eq}) as
\begin{eqnarray}
u^{\varepsilon}_t=\left(0.5x^{2}\sigma^{2}+\sigma^{2}e^{rt}a^{2}x^{4}u^{%
\varepsilon}_{x}+\varepsilon\right)u^{\varepsilon}_{xx}
+2\sigma^{2}a^{2}e^{rt}x^{3}(u^{\varepsilon}_{x})^2 && \\
+(r-q+\sigma^{2})xu^{\varepsilon}_x-qu^{\varepsilon},&&  \notag
\label{eq_eps_nd}
\end{eqnarray}
and differentiating with respect to $x$, we obtain
\begin{eqnarray*}
u^{\varepsilon}_{tx}=\left(\left(0.5x^{2}\sigma^{2}+%
\sigma^{2}e^{rt}a^{2}x^{4}u^{\varepsilon}_{x}
+\varepsilon\right)u^{\varepsilon}_{xx}\right)_{x}
+4\sigma^{2}e^{rt}a^{2}x^{3}u^{\varepsilon}_{x}u^{\varepsilon}_{xx}&& \\
+(r-q+\sigma^2)xu^{\varepsilon}_{xx}+6\sigma^{2}e^{rt}a^{2}x^{2}(u^{%
\varepsilon}_{x})^{2}+(r-2q+\sigma^2)u^{\varepsilon}_{x}. &&  \notag
\end{eqnarray*}
Setting $w=u^{\varepsilon}_{x}$, we arrive at the equation
\begin{eqnarray}  \label{prob51_aux}
w_{t}=\left(\left(0.5x^{2}\sigma^{2}+\sigma^{2}e^{rt}a^{2}x^{4}w
+\varepsilon\right)w_{x}\right)_{x}+4\sigma^{2}e^{rt}a^{2}x^{3}ww_{x} &&
\notag \\
+(r-q+\sigma^{2})xw_{x}+6\sigma^{2}e^{rt}a^{2}x^{2}w^{2}
+(\sigma^{2}-2q+r)w&&
\end{eqnarray}
with initial and  boundary conditions
\begin{equation}  \label{prob51_aux_cond}
w(x,0)=v_0^{\prime }(x),\quad w(0,t)= w(b,t)=0.
\end{equation}

\begin{theorem}
Let $w$ be the weak solution of (\ref{prob51_aux})-(\ref{prob51_aux_cond})
then
\begin{eqnarray}  \label{wlinf}
\|w\|_{L^\infty(0,T,L^{\infty}(0,b))}\leq C\varepsilon^{-4}
\end{eqnarray}
where $C=C(a,\sigma,r,q,b,T)$ does not depend on $\varepsilon$.
\end{theorem}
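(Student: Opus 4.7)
The plan is a Moser-type iteration on equation (\ref{prob51_aux}), followed by a passage to the limit $k\to\infty$. Multiplying the equation for $w$ by $w^{2k-1}$ for an integer $k\ge 1$, integrating over $[0,b]$, and using the boundary conditions $w(0,t)=w(b,t)=0$ together with the sign assumption $w\ge 0$, the divergence term contributes the nonpositive quantity $-(2k-1)\int_{0}^{b}a^{\varepsilon}_{0}w^{2k-2}w_{x}^{2}\,dx$ on the left-hand side, which I simply drop. Integration by parts on $4\sigma^{2}e^{rt}a^{2}x^{3}ww_{x}\cdot w^{2k-1}$ gives $-\tfrac{12}{2k+1}\sigma^{2}e^{rt}a^{2}\int_{0}^{b}x^{2}w^{2k+1}\,dx$, which combines with the source $6\sigma^{2}e^{rt}a^{2}x^{2}w^{2}\cdot w^{2k-1}$ into the net contribution $\tfrac{6(2k-1)}{2k+1}\sigma^{2}e^{rt}a^{2}\int x^{2}w^{2k+1}\,dx$; the $(r-q+\sigma^{2})xw_{x}$ and $(\sigma^{2}-2q+r)w$ terms, treated similarly, produce only lower-order multiples of $\|w\|_{L^{2k}(0,b)}^{2k}$. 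The outcome is an inequality of the shape
\begin{equation*}
\frac{1}{2k}\frac{d}{dt}\|w\|_{L^{2k}(0,b)}^{2k}\le C_{1}\int_{0}^{b}x^{2}w^{2k+1}\,dx+C_{2}\|w\|_{L^{2k}(0,b)}^{2k},
\end{equation*}
with $C_{1},C_{2}$ uniform in $k$.

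The key estimate is the H\"older-type bound
\begin{equation*}
\int_{0}^{b}x^{2}w^{2k+1}\,dx\le b^{2}\|w\|_{L^{\infty}(0,b)}^{2}\int_{0}^{b}w^{2k-1}\,dx\le C\|w\|_{L^{\infty}(0,b)}^{2}\|w\|_{L^{2k}(0,b)}^{2k-1},
\end{equation*}
where the factor $b^{1/(2k)}\le\max(1,b^{1/2})$ coming from H\"older has been absorbed in $C$. Substituting and dividing by $2k\|w\|_{L^{2k}}^{2k-1}$ cancels the prefactor $2k$ and yields the $k$-independent linear inequality
\begin{equation*}
\frac{d}{dt}\|w\|_{L^{2k}(0,b)}\le C\|w\|_{L^{\infty}(0,b)}^{2}+C\|w\|_{L^{2k}(0,b)}.
\end{equation*}
Standard Gronwall then gives $\|w(t)\|_{L^{2k}(0,b)}\le e^{CT}\bigl(\|w(0)\|_{L^{2k}(0,b)}+C\int_{0}^{T}\|w\|_{L^{\infty}(0,b)}^{2}\,ds\bigr)$. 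Since $\|w(0)\|_{L^{2k}(0,b)}\le b^{1/(2k)}\|w(0)\|_{L^{\infty}}\le C\varepsilon^{-1}$ (from $|v_{0}^{\prime}|\le C\varepsilon^{-1}$) and $\int_{0}^{T}\|w\|_{L^{\infty}(0,b)}^{2}\,ds\le C\varepsilon^{-4}$ by Corollary \ref{uxl2inf}, one obtains $\|w(t)\|_{L^{2k}(0,b)}\le C\varepsilon^{-4}$ uniformly in $k$ and in $t\in[0,T]$. Letting $k\to\infty$ closes the argument.

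The main obstacle is the quadratic source $6\sigma^{2}e^{rt}a^{2}x^{2}w^{2}$: the naive bound $\int x^{2}w^{2k+1}\le C\|w\|_{L^{\infty}}\|w\|_{L^{2k}}^{2k}$ would, after Gronwall, produce an exponential factor $\exp\bigl(C\int_{0}^{T}\|w\|_{L^{\infty}}\,ds\bigr)$, which Cauchy--Schwarz on Corollary \ref{uxl2inf} only controls by $\exp(C\varepsilon^{-2})$, far worse than the target $\varepsilon^{-4}$. The trick is therefore to peel \emph{two} powers of $w$ onto $\|w\|_{L^{\infty}}^{2}$ rather than only one, leaving $\|w\|_{L^{2k}}^{2k-1}$ in the residual; after dividing through by the same power, the differential inequality becomes linear in $\|w\|_{L^{2k}}$ and the Corollary's $L^{2}$-in-time control of $\|w\|_{L^{\infty}}^{2}$ enters additively through a single integral, which is what yields the polynomial rate.
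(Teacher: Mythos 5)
Your proposal follows essentially the same route as the paper: the same Moser-type iteration with the test function $w^{2k-1}$, the same dropping of the nonnegative diffusion term via the sign assumption on $w$, the same H\"older step peeling off $\|w\|_{L^{\infty}(0,b)}^{2}$ so that the differential inequality for $\|w\|_{L^{2k}(0,b)}$ becomes linear, and the same conclusion via Gronwall, Corollary \ref{uxl2inf}, the bound (\ref{v0xlinf}) on the initial data, and the limit $k\to\infty$. The remark explaining why two powers of $w$ (rather than one) must be transferred to the $L^{\infty}$ norm is a correct and useful articulation of the point that makes the paper's argument work.
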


\begin{proof}
The proof is similar to  that of Theorem \ref{teo_v_inf}. Multiplying  (\ref{prob51_aux}) by  $w^{2k-1}$,  $k\geq 1$, and integrating with respect to $x$, we obtain

\begin{eqnarray*}
\int\limits_{0}^{b} w_{t}w^{2k-1}\,dx +(2k-1)\int\limits_{0}^{b}\left(0.5x^{2}\sigma^{2}+\sigma^{2}e^{rt}a^{2}x^{4}w
+\varepsilon\right)w^{2}_{x}w^{2k-2}\,dx
&&\\ \nonumber=
\int\limits_{0}^{b}4\sigma^{2}e^{rt}a^{2}x^{3}w_{x}w^{2k}\,dx +\int\limits_{0}^{b}(r-q+\sigma^{2})xw_{x}w^{2k-1}\,dx&&\\ \nonumber +\int\limits_{0}^{b}6\sigma^{2}e^{rt}a^{2}x^{2}w^{2k+1}\,dx +\int\limits_{0}^{b}(\sigma^{2}+r-2q)w^{2k}\,dx.&&
\end{eqnarray*}
Applying  integration by parts to the right hand side, we get
\begin{eqnarray*}
\frac{1}{2k}\frac{d}{dt}\int\limits_{0}^{b} w^{2k}\,dx +(2k-1)\int\limits_{0}^{b}\left(0.5x^{2}\sigma^{2}+\sigma^{2}e^{rt}a^{2}x^{4}w
+\varepsilon\right)w^{2}_{x}w^{2k-2}\,dx && \\ = -\int\limits_{0}^{b}\frac{12}{2k+1}\sigma^{2}e^{rt}a^{2}x^{2}w^{2k+1}\,dx -\int\limits_{0}^{b}\frac{r-q+\sigma^{2}}{2k}w^{2k}\,dx
&&\\ \nonumber +\int\limits_{0}^{b}6\sigma^{2}e^{rt}a^{2}x^{2}w^{2k+1}\,dx +\int\limits_{0}^{b}(\sigma^{2}+r-2q)w^{2k}\,dx.&&
  \end{eqnarray*}
Taking into account the bounds of the coefficients, we have
$$
\frac{1}{2k}\frac{d}{dt}\int\limits_{0}^{b} w^{2k}\,dx +(2k-1)\varepsilon\int\limits_{0}^{b}w^{2}_{x}w^{2k-2}\,dx\leq C\int\limits_{0}^{b}w^{2k+1}\,dx +C\int\limits_{0}^{b}w^{2k}\,dx.$$
Ignoring the second term, because it is nonnegative by Remark \ref{eq:95}, and using a Sobolev embedding, we obtain
$$\|w\|_{L^{2k}}^{2k-1}\frac{d}{dt}\|w\|_{L^{2k}}  \leq C \|w\|_{L^{\infty}}^{2}b^{\frac{2k-1}{(2k-1)(2k)}}\|w\|_{L^{2k}}^{2k-1}+C\|w\|_{L^{2k}}^{2k}.$$
Therefor
$$\frac{d}{dt}\|w\|_{L^{2k}}  \leq C \|w\|_{L^{\infty}}^{2}b^{\frac{1}{2k}}+C\|w\|_{L^{2k}}.$$
Applying Gronwall's inequality and  Corollary \ref{uxl2inf},
\begin{eqnarray*}
\|w\|_{L^{2k}} &\leq& C\|w(x,0)\|_{L^{2k}}  +Cb^{\frac{1}{2k}}\int_0^T\|w\|_{L^{\infty}}^{2}\,dt\\
& \leq & C\|w(x,0)\|_{L^{2k}}  +Cb^{\frac{1}{2k}}\varepsilon^{-4}.\\
  \end{eqnarray*}
Finally taking $k\to\infty$ and attending to (\ref{v0xlinf}), the required  follows.
\end{proof}This result finishes this section.

\section{Convergence}

In this section we prove the convergence of the approximate solutions $%
u^{\varepsilon}$ of the viscosity solution $u$ of (\ref{delta_eq})-(\ref%
{eq:4002}) and (\ref{eq_fr_b}). The idea is that the so-called upper weak
limit $\overline{u}$ and the lower weak limit $\underline{u}$ are,
respectively, a viscosity subsolution and supersolution of (\ref{def_s200}). On, the one
hand, we always have $\underline{u}\leq \overline{u} $ in $Q_{T}$, and  on the
other hand, the comparison principle implies that $\overline{u}\leq
\underline{u} $ a.e. in $Q_{T}$. Finally, it is easy to see that this
equality implies the local $L^{\infty}$ convergence of $u^{\varepsilon}$ to
the function $u=\underline{u}= \overline{u} $ as $\varepsilon\rightarrow 0$,
which turns out to be a unique bounded viscosity solution of (\ref{eq:15000}).

With the estimates in the last section in hand we can now prove the existence and
uniqueness of a classical solution $u^{\varepsilon}$.

\begin{theorem}
Problem (\ref{epsilon_eq})-(\ref{eq:40011}) has a unique classical solution
\begin{equation}  \label{uc2}
u^{\varepsilon}\in C^{2+\alpha,1+\alpha/2}(\bar{Q}_T).
\end{equation}
\end{theorem}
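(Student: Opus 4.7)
The plan is to apply the classical Leray--Schauder fixed point theory for quasilinear parabolic equations in the spirit of Ladyzhenskaya, Solonnikov and Ural'ceva \cite{LSU68}, using the a priori estimates assembled in Section 4 as inputs. First I would rewrite equation (\ref{epsilon_eq}) in the standard non-divergence quasilinear form
\begin{equation*}
u^{\varepsilon}_t = A(x,t,u^{\varepsilon}_x)\,u^{\varepsilon}_{xx}+B(x,t,u^{\varepsilon},u^{\varepsilon}_x),
\end{equation*}
with $A = 0.5\sigma^{2}x^{2}(1+2e^{rt}a^{2}x^{2}u^{\varepsilon}_x)+\varepsilon$. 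The decisive structural fact is that, thanks to the $+\varepsilon$ perturbation together with $u^{\varepsilon}_x\ge 0$ (Remark \ref{eq:95}), one has $A\ge \varepsilon>0$ uniformly on $\bar Q_T$, so the regularized problem is uniformly parabolic, and the bound $\|u^{\varepsilon}_x\|_{L^{\infty}}\le C\varepsilon^{-4}$ from the last theorem of Section 4 supplies an upper bound on $A$ as well.

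Next I would set up a Leray--Schauder map $\mathcal{T}:C^{1+\alpha,(1+\alpha)/2}(\bar Q_T)\to C^{1+\alpha,(1+\alpha)/2}(\bar Q_T)$ that sends $\tilde u$ to the unique solution of the \emph{linear} problem obtained by freezing the leading coefficient at $A(x,t,\tilde u_x)$, with boundary data $0$ and $1$ and the Hermite-regularized initial datum (\ref{eq:40011}). Linear parabolic Schauder theory applied to this frozen problem gives $\mathcal{T}(\tilde u)\in C^{2+\alpha,1+\alpha/2}$, and the compactness of the embedding $C^{2+\alpha,1+\alpha/2}\hookrightarrow C^{1+\alpha,(1+\alpha)/2}$ makes $\mathcal{T}$ compact. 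To run the Leray--Schauder alternative, any fixed point $u^{\varepsilon}=\tau\mathcal{T}(u^{\varepsilon})$, $\tau\in[0,1]$, must satisfy the same type of equation, so the $L^{\infty}$ bounds on $u^{\varepsilon}$ and $u^{\varepsilon}_x$ of Section 4 apply uniformly in $\tau$; interpolating the estimate $\|v_{xx}\|_{L^{2}(0,T;L^{2})}\le C\varepsilon^{-2}$ with $\|w\|_{L^{\infty}}\le C\varepsilon^{-4}$ via the one-dimensional Gagliardo--Nirenberg inequality yields the required uniform Hölder bound on $u^{\varepsilon}_x$, producing the fixed point.

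Having obtained $u^{\varepsilon}\in C^{1+\alpha,(1+\alpha)/2}$, the coefficient $A(x,t,u^{\varepsilon}_x)$ becomes Hölder continuous in $(x,t)$, and a bootstrap argument using the linear Schauder estimates of \cite{LSU68} on the now-linear equation with Hölder data promotes the regularity to $u^{\varepsilon}\in C^{2+\alpha,1+\alpha/2}(\bar Q_T)$, which is (\ref{uc2}). Uniqueness follows by the standard comparison principle: if $u_1,u_2$ are two classical solutions, their difference $w=u_1-u_2$ satisfies a linear uniformly parabolic equation with bounded coefficients and homogeneous initial-boundary data, so $w\equiv 0$.

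The main obstacle I foresee is not the Leray--Schauder machinery itself but the verification of the \emph{compatibility conditions} at the parabolic corners $(0,0)$ and $(b,0)$: classical $C^{2+\alpha,1+\alpha/2}$ regularity up to the closure $\bar Q_T$ requires that the initial data match the boundary data together with sufficiently many time derivatives produced by the equation. This is precisely why the initial datum was mollified by the quintic Hermite polynomial $H_5$ with vanishing first and second derivatives at $K\pm\varepsilon$, and one must check that the resulting $u^{\varepsilon}(\cdot,0)$ together with $g_1(t)\equiv 0$ and $g_2(t)\equiv 1$ satisfies the first-order compatibility condition at both endpoints; the fact that $x=0$ makes $A$ degenerate without the $\varepsilon$-term, and that $x^{2}u^{\varepsilon}_{xx}(0,0)=0$ for the regularized initial datum, are the technical checks that make this work.
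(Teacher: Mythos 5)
Your overall route coincides with the paper's: use the a priori $L^{\infty}$ bounds of Section~4, deduce H\"older continuity of $u^{\varepsilon}_x$ so that the frozen coefficient is H\"older, and then invoke the Leray--Schauder existence theory of \cite{LSU68} together with the comparison principle for uniqueness. However, there is a genuine gap at the one step that actually carries the weight of the argument: you claim that interpolating $\|v_{xx}\|_{L^{2}(0,T;L^{2}(0,b))}\leq C\varepsilon^{-2}$ with $\|w\|_{L^{\infty}}\leq C\varepsilon^{-4}$ via Gagliardo--Nirenberg ``yields the required uniform H\"older bound on $u^{\varepsilon}_x$.'' It does not. What Schauder theory requires is a bound in the \emph{parabolic} H\"older space $C^{\alpha,\alpha/2}(\bar Q_T)$, i.e.\ H\"older continuity of $u^{\varepsilon}_x$ in $x$ uniformly in $t$ \emph{and} H\"older continuity of order $\alpha/2$ in $t$. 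The estimate $v_{xx}\in L^{2}(0,T;L^{2}(0,b))$ only gives $u^{\varepsilon}_x(\cdot,t)\in H^{1}(0,b)\hookrightarrow C^{0,1/2}([0,b])$ for \emph{almost every} $t$, with a constant that is merely square-integrable in $t$, hence not a uniform spatial H\"older bound; and no amount of interpolation of these two quantities controls the modulus of continuity of $u^{\varepsilon}_x$ in the time variable. The paper closes this step differently: having the $L^{\infty}$ bounds (\ref{555}) on $u^{\varepsilon}$ and (\ref{wlinf}) on $u^{\varepsilon}_x$, it regards the (uniformly parabolic, thanks to $+\varepsilon$) equation as linear with bounded measurable coefficients and applies the gradient H\"older estimate, Theorem~5.1 of Chapter~VI of \cite{LSU68}, which delivers $u^{\varepsilon}_x\in C^{\alpha,\alpha/2}(\bar Q_T)$ directly. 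You should replace your interpolation step by this (De Giorgi--Nash--Ladyzhenskaya type) regularity theorem; the rest of your fixed-point and bootstrap scheme then goes through as in the paper.

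Two smaller remarks. First, your attention to the compatibility conditions at the parabolic corners $(0,0)$ and $(b,0)$ is well placed and is in fact a point the paper passes over in silence; for regularity in $C^{2+\alpha,1+\alpha/2}$ up to $\bar Q_T$ this does need to be checked, and the vanishing of $H_5'$ and $H_5''$ at $K\pm\varepsilon$ is relevant only to the smoothness of the initial datum in the interior, not to the corner compatibility at $x=0$ and $x=b$. Second, the paper cites Theorem~5.2 of Chapter~IV (in context, the quasilinear existence theorem of \cite{LSU68}) rather than constructing the Leray--Schauder map by hand, but that is a presentational difference, not a mathematical one.
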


\begin{proof}
Attending to (\ref{555}) and  (\ref{wlinf}) we are able to consider Equation (\ref{epsilon_eq}) as a linear equation with bounded coefficients. So, applying Theorem 5.1 from chapter VI in \cite{LSU68}, we obtain that $ u^{\varepsilon}_x\in C^{\alpha,\alpha/2}(\bar{Q}_T)$ $0<\alpha<1$. Which allows us to prove that the coefficients are H\"older continuous. Using the Leray-Schauder theory, namely Theorem 5.2 from chapter IV in \cite{LSU68}, we conclude that, there exists a unique classical solution of (\ref{epsilon_eq}) satisfying (\ref{uc2}).
\end{proof}

Taking into account  conditions (\ref{h100}) and (\ref{h101}), we have
\begin{eqnarray}
\overline{u}(x,t) &=& \lim_{r \to 0 } sup \{
u^{\varepsilon}(y,\tau):\|(y,\tau)-(x,t)\|\leq r, \, \varepsilon \leq r ,\,
(y,\tau) \in Q_{T}\}\quad  \label{u_up} \\
&=&\lim_{\varepsilon \to 0 } sup^{*} u^{\varepsilon}(t,x) <\infty  \notag
\end{eqnarray}
\begin{eqnarray}
\underline{u}(x,t) &= &\lim_{r \to 0 } inf \{
u^{\varepsilon}(y,\tau):\|(y,\tau)-(x,t)\|\geq r, \, \varepsilon \leq r, \,
(y,\tau) \in Q_{T}\}\quad  \label{u_down} \\
&=&\lim_{\varepsilon \to 0 } inf_{*} u^{\varepsilon}(t,x)>-\infty  \notag
\end{eqnarray}
Since $u^{\varepsilon}$ is the classical solution of (\ref{epsilon_eq})-(\ref%
{eq:40011}), it is also viscosity solution for each $\varepsilon>0$.
Applying Proposition 4.7 in Chapter V of \cite{BC08} with appropriate
adaptations to parabolic equations we can prove the next result.

\begin{theorem}
\label{teo_sss} Let  $u^{\varepsilon}(x,t)$ be the classical solution of (\ref%
{epsilon_eq})-(\ref{eq:40011}) then $\overline{u}$ and $\underline{u}$
defined by (\ref{u_up}) and (\ref{u_down}) are sub and super
viscous solutions of (\ref{eq:15000}), respectively.
\end{theorem}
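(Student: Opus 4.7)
The plan is to prove the statement by adapting the classical half-relaxed limits argument (Barles-Perthame) to this parabolic, bounded-domain setting. The key structural ingredient is that the regularized equation, written in non-divergence form, has the operator
\[
F^{\varepsilon}(x,t,u,p,X) = F(x,t,u,p,X) - \varepsilon X,
\]
so $F^{\varepsilon}\to F$ locally uniformly on $\overline{Q}_T \times \mathds{R}^3$ as $\varepsilon\to 0$. Since each $u^{\varepsilon}\in C^{2+\alpha,1+\alpha/2}(\overline{Q}_T)$ is a classical solution, it is a (continuous) viscosity solution of the regularized problem. By the uniform $L^{\infty}$ bound from Theorem~\ref{teo_v_inf} applied to $v = u^{\varepsilon} - x/b$, the family $\{u^{\varepsilon}\}$ is uniformly bounded, which ensures that $\overline{u}$ and $\underline{u}$ are well-defined, finite, and respectively upper- and lower-semicontinuous on $\overline{Q}_T$.

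I will only write the subsolution part for $\overline{u}$, since the argument for $\underline{u}$ is symmetric. By Remark~\ref{eq:9523}, fix a smooth $\phi$ and assume $(x_0,t_0)$ is a \emph{strict} local maximum of $\overline{u}-\phi$ with $(\overline{u}-\phi)(x_0,t_0)=0$. The first step is the standard extraction lemma: on a small closed cylinder $\overline{B}_r(x_0,t_0)\cap \overline{Q}_T$ the continuous functions $u^{\varepsilon}-\phi$ attain their maxima at points $(x_{\varepsilon},t_{\varepsilon})$; using the definition~\eqref{u_up} and the strictness of the max at $(x_0,t_0)$, one shows that up to a subsequence $(x_{\varepsilon_n},t_{\varepsilon_n})\to(x_0,t_0)$ and $u^{\varepsilon_n}(x_{\varepsilon_n},t_{\varepsilon_n})\to \overline{u}(x_0,t_0)$, and moreover the maxima are interior for $n$ large.

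Assume first $(x_0,t_0)\in Q_T$. Since $u^{\varepsilon_n}$ is a classical subsolution and $(x_{\varepsilon_n},t_{\varepsilon_n})$ is an interior local max of $u^{\varepsilon_n}-\phi$, we have $u^{\varepsilon_n}_t = \phi_t$, $u^{\varepsilon_n}_x = \phi_x$, $u^{\varepsilon_n}_{xx}\leq \phi_{xx}$ at that point; plugging this into the regularized PDE and using monotonicity of $F^{\varepsilon_n}$ in the Hessian entry (ellipticity, which holds because the coefficient of $u_{xx}$ is $a_0^{\varepsilon} > 0$ under Remark~\ref{eq:95}), yields
\[
\phi_t(x_{\varepsilon_n},t_{\varepsilon_n}) + F(x_{\varepsilon_n},t_{\varepsilon_n},u^{\varepsilon_n}(x_{\varepsilon_n},t_{\varepsilon_n}),\phi_x,\phi_{xx}) - \varepsilon_n\,\phi_{xx}(x_{\varepsilon_n},t_{\varepsilon_n}) \leq 0.
\]
Letting $n\to\infty$ and using continuity of $F$ and of the derivatives of $\phi$ gives the desired viscosity subsolution inequality at $(x_0,t_0)$.

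For the parabolic boundary, I would handle the two cases of Definition~\ref{def_vis} in the ``either/or'' form: if $(x_0,t_0)\in\partial\Omega\times[0,T[$, then since $u^{\varepsilon_n}(0,t)=0$ and $u^{\varepsilon_n}(b,t)=1$ for every $\varepsilon_n$, passing to the upper half-relaxed limit gives $\overline{u}(0,t_0)\leq 0 = g(0,t_0)$ and $\overline{u}(b,t_0)\leq 1 = g(b,t_0)$, so the boundary alternative is satisfied; otherwise the maximum is attained in the interior and the previous argument applies. For the initial time $t_0=0$, use that the construction~\eqref{eq:40011} of $u_0^{\varepsilon}$ via the Hermite interpolant $H_5$ forces $u_0^{\varepsilon}(x)\leq u_0^{*}(x)$ pointwise (monotonicity of $H_5$ between $0$ and $1$), so $\overline{u}(x,0)\leq u_0^{*}(x)$ and again the alternative holds. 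The main obstacle, and the place where care is needed beyond citing \cite{BC08}, is the extraction lemma at points of $\partial\Omega\times\{0\}$ (the ``corner''), where one must verify that the strict-max point does not escape to the parabolic boundary in a way that spoils the convergence $u^{\varepsilon_n}(x_{\varepsilon_n},t_{\varepsilon_n})\to\overline{u}(x_0,t_0)$; this is resolved by a standard perturbation of $\phi$ by a term $|x-x_0|^2+|t-t_0|^2$ that forces the maximizers to stay near $(x_0,t_0)$.
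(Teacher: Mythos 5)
Your overall strategy is the same one the paper uses: the paper's entire proof is a one-line appeal to Proposition 4.7 in Chapter V of \cite{BC08}, i.e., precisely the Barles--Perthame half-relaxed-limits stability argument that you write out in detail. Your interior argument (strict maximum, extraction of converging maximizers, first- and second-order conditions at the maximum, degenerate ellipticity from Remark \ref{eq:95}, and passage to the limit using $F^{\varepsilon}\to F$ locally uniformly) is correct and is exactly what the citation hides.

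There is, however, a genuine error in your treatment of the parabolic boundary. You claim that $u^{\varepsilon}(0,t)=0$ and $u^{\varepsilon}(b,t)=1$ imply $\overline{u}(0,t_0)\le 0$ and $\overline{u}(b,t_0)\le 1$, and that $u_0^{\varepsilon}\le u_0^{*}$ implies $\overline{u}(x,0)\le u_0^{*}(x)$. Neither implication is valid: by (\ref{u_up}), the value of $\overline{u}$ at a boundary or initial point is a $\limsup$ over nearby \emph{interior} space-time points $(y,\tau)$ as well, where the pointwise boundary and initial data of $u^{\varepsilon}$ give no control. This is exactly the boundary-layer/initial-layer phenomenon that the either/or formulation of Definition \ref{def_vis} is designed to accommodate; ruling it out would require barrier constructions, which you do not provide, and the available gradient bound $\|u^{\varepsilon}_x\|_{L^{\infty}}\le C\varepsilon^{-4}$ degenerates as $\varepsilon\to 0$ and so cannot substitute for them. (Incidentally, the pointwise claim $u_0^{\varepsilon}\le u_0^{*}$ is itself false: $H_5>0=u_0^{*}$ on $(K-\varepsilon,K)$.) The correct argument is the standard dichotomy on the location of the maximizers produced by your extraction lemma: along the chosen subsequence, either infinitely many $(x_{\varepsilon_n},t_{\varepsilon_n})$ lie in $Q_T$, in which case your interior computation yields the PDE alternative at $(x_0,t_0)$, or infinitely many lie on the parabolic boundary, in which case $u^{\varepsilon_n}(x_{\varepsilon_n},t_{\varepsilon_n})$ coincides with the data, and the convergence $u^{\varepsilon_n}(x_{\varepsilon_n},t_{\varepsilon_n})\to\overline{u}(x_0,t_0)$ together with $\limsup_n u_0^{\varepsilon_n}(x_{\varepsilon_n})\le u_0^{*}(x_0)$ (which does hold in this relaxed sense, since $0\le H_5\le 1$ and the interpolation interval shrinks to $\{K\}$) yields the data alternative. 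With that repair the proof is complete; note also that $Q_T=\Omega\times]0,T]$ includes $t=T$, where a maximum of $u^{\varepsilon_n}-\phi$ only gives $u^{\varepsilon_n}_t\ge\phi_t$, which still suffices for the subsolution inequality.
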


In order to apply the comparison theorem, we require the next two  lemmas.

\begin{lemma}
The operator $F$ defined in (\ref{eq:15000}) is proper, that is
\begin{equation}
F(x,t,s,p,X)\leq F(x,t,s,p,Y) \qquad \text{whenever} \qquad X\geq Y
\end{equation}
and
\begin{equation}
F(x,t,r,p,X)\leq F(x,t,s,p,X) \qquad \text{whenever} \qquad r\leq s
\end{equation}
where $x,t,r,s,p,X,Y \in \mathds{R}$.\newline
\end{lemma}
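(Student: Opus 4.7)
The plan is to directly verify the two monotonicity conditions by inspecting the explicit expression
\begin{equation*}
F(x,t,s,p,X)=-\tfrac{1}{2}x^{2}\sigma^{2}\bigl(1+2e^{rt}a^{2}x^{2}p\bigr)X-2\sigma^{2}e^{rt}a^{2}x^{3}p^{2}-(r-q+\sigma^{2})xp+qs.
\end{equation*}
First I would note that the slot $s$ enters $F$ only through the final term $qs$; hence monotonicity of $F$ in $s$ reduces to the sign condition $q\ge 0$, which is guaranteed by the financial interpretation of $q$ as a non-negative continuous dividend rate.

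Next I would handle the monotonicity in $X$. The only term involving $X$ is
\begin{equation*}
-\tfrac{1}{2}x^{2}\sigma^{2}\bigl(1+2e^{rt}a^{2}x^{2}p\bigr)X,
\end{equation*}
so $F$ is nonincreasing in $X$ if and only if the coefficient $\tfrac{1}{2}x^{2}\sigma^{2}(1+2e^{rt}a^{2}x^{2}p)$ is nonnegative. This is exactly the ellipticity condition running throughout the paper: invoking Remark \ref{eq:95} (equivalently the standing assumption $V_{SS}\ge 0$, i.e.\ $u_{x}\ge 0$, made at the end of Section~2), whenever we test against admissible gradients $p\ge 0$ the factor $1+2e^{rt}a^{2}x^{2}p$ is bounded below by $1>0$, so the coefficient is nonnegative and the required inequality $F(x,t,s,p,X)\le F(x,t,s,p,Y)$ for $X\ge Y$ follows.

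The hard part will not be the calculation itself but rather pinning down the admissible range of the slot $p$: the lemma as displayed quantifies over all $p\in\mathds{R}$, yet the operator is only degenerate parabolic on the half-space $\{1+2e^{rt}a^{2}x^{2}p\ge 0\}$. Accordingly I would either restate the lemma restricting $p$ to this region (the only relevant one for the viscosity-solution argument of Section~5, where test-function derivatives at extrema of $u^{*}-\phi$ and $u_{*}-\phi$ inherit the positivity coming from $u_{x}\ge 0$), or else pass to the proper envelope $\widetilde{F}(x,t,s,p,X)=\sup_{Y\le X}F(x,t,s,p,Y)$, which automatically enforces $X$-monotonicity at no cost in the comparison argument that follows. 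In either case, once the admissible domain is fixed, the proof reduces to the two line-by-line sign checks sketched above.
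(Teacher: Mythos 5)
Your proof is correct and essentially identical to the paper's, which performs the same two line-by-line sign checks: the difference in the $X$-slot equals $0.5x^{2}\sigma^{2}\bigl(1+2e^{rt}a^{2}x^{2}p\bigr)(Y-X)$ with the coefficient declared nonnegative ``by the assumptions on the problem'' (i.e.\ Remark \ref{eq:95}, $p\ge 0$), and the difference in the $s$-slot equals $q(r-s)\le 0$. Your remark that the lemma as stated quantifies over all $p\in\mathds{R}$ while the sign of the diffusion coefficient is only guaranteed for $p\ge 0$ is a fair point, but it applies equally to the paper's own proof, which makes the same implicit restriction.
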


\begin{proof}
By the definition of $F$ and the assumptions on the problem, we have
\begin{eqnarray*}
F(x,t,s,p,X)- F(x,t,s,p,Y)= \underbrace {\left({0.5x^{2}\sigma^{2}}(1+ 2e^{rt}a^{2}x^{2}p\right)}\limits_{\geq 0} (Y-X)\leq  0&&\\
 \quad \text{whenever} \quad X\geq Y
\end{eqnarray*}
and
\begin{eqnarray*}
F(x,t,r,p,X)- F(x,t,s,p,X)= q(r-s)<0 \qquad \text{whenever} \qquad r\leq s.
\end{eqnarray*}
\end{proof}

\begin{lemma}
\label{6501} Let $F$ be defined by (\ref{eq:15000}). Then there exists $%
\gamma>0$ such that
\begin{equation*}
\gamma(r-s)\leq F(x,t,r,p,X)- F(x,t,s,p,X) \text{ for } r\geq s \text{ and }
x,t,p,X \in \mathds{R},
\end{equation*}
and there is a function $\omega:[0,\infty]\to [0,\infty]$ that satisfies $%
\omega(0^+)=0$ such that
\begin{equation*}
F(y,t,s,\alpha(x-y),Y)-F(x,t,s,\alpha(x-y),X)\leq \omega(\alpha|x-y|^2+|x-y|)
\end{equation*}
for $x,y\in]0,b[$, $t\in]0,T[$ fixed, $X,Y\in\mathds{R}$ and $\alpha$
given by
\begin{equation}  \label{cond310}
-3\alpha\left[%
\begin{array}{cc}
1 & 0 \\
0 & 1%
\end{array}%
\right]\leq \left[%
\begin{array}{cc}
X & 0 \\
0 & -Y%
\end{array}%
\right]\leq 3\alpha \left[%
\begin{array}{cc}
1 & -1 \\
-1 & 1%
\end{array}%
\right].
\end{equation}
\end{lemma}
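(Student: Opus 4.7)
The first inequality is immediate from the structure of $F$: among its terms only the contribution $qu$ depends on the third argument. A direct calculation gives $F(x,t,r,p,X)-F(x,t,s,p,X)=q(r-s)$, and since the dividend rate $q>0$ is a standing hypothesis, the choice $\gamma=q$ suffices.

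For the second inequality, my plan is to decompose the difference, with $p:=\alpha(x-y)$, as
\begin{equation*}
F(y,t,s,p,Y)-F(x,t,s,p,X)=\bigl[\Phi(x)X-\Phi(y)Y\bigr]+\bigl[B_0(x,p)-B_0(y,p)\bigr]+\bigl[C_0(x,p)-C_0(y,p)\bigr],
\end{equation*}
where $\Phi(z)=0.5\sigma^{2}z^{2}(1+2e^{rt}a^{2}z^{2}p)$ groups the second-order coefficient, $B_0(z,p)=2\sigma^{2}e^{rt}a^{2}z^{3}p^{2}$ is the quadratic gradient term, and $C_0(z,p)=(r-q+\sigma^{2})zp$ is the linear drift. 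The terms $B_0$ and $C_0$ are polynomial in $z$ on the bounded interval $[0,b]$, so elementary Lipschitz estimates yield at once $|B_0(x,p)-B_0(y,p)|\leq C|p|^{2}|x-y|$ and $|C_0(x,p)-C_0(y,p)|\leq C|p|\,|x-y|$.

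The delicate piece is $\Phi(x)X-\Phi(y)Y$, and this is where the matrix inequality \eqref{cond310} is the essential ingredient. I would test \eqref{cond310} against the vector $\xi=(\sqrt{\Phi(x)},\sqrt{\Phi(y)})^{T}$---well-defined because $\Phi\geq 0$ in the admissible regime $1+2e^{rt}a^{2}z^{2}p\geq 0$ of the model---to obtain $\Phi(x)X-\Phi(y)Y\leq 3\alpha\bigl(\sqrt{\Phi(x)}-\sqrt{\Phi(y)}\bigr)^{2}$. Writing $\sqrt{\Phi(z)}=z\sigma\sqrt{0.5+e^{rt}a^{2}z^{2}p}$, this function is $C^{1}$ in $z$ on $[0,b]$ with derivative bounded by $C(1+|p|^{1/2})$; a Taylor expansion then gives $\bigl(\sqrt{\Phi(x)}-\sqrt{\Phi(y)}\bigr)^{2}\leq C(1+|p|)(x-y)^{2}$, so this piece is at most $3C\alpha(1+|p|)|x-y|^{2}$.

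Gathering all three estimates with $|p|=\alpha|x-y|$ leads to a bound of the shape $C(\alpha|x-y|^{2}+|x-y|)(1+\alpha|x-y|)$, from which a choice such as $\omega(s)=C(s+s^{2})$ produces the desired modulus with $\omega(0^{+})=0$. I expect the main difficulty to be conceptual rather than computational: the extra factor $1+\alpha|x-y|$ is not automatically small, so the structure condition is really only useful in the regime where $\alpha|x-y|$ remains controlled. This is the standard situation in the comparison argument, where the inequality is invoked at doubled-variable maxima of $u-v-\tfrac{\alpha}{2}|x-y|^{2}$ and the $L^{\infty}$ bounds on the sub- and supersolution constrain $\alpha|x-y|$; the polynomial shape of $\omega$ then absorbs the residual factor.
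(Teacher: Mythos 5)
Your route is the same as the paper's: $\gamma=q$ for the monotonicity in the zeroth-order argument (the paper inherits this from the preceding properness lemma), and for the structure condition the paper likewise expands the difference, applies \eqref{cond310} to the second-order part, and bounds the remaining polynomial-in-$x$ terms by Lipschitz estimates on $[0,b]$, concluding with $\omega(s)=Cs$. The only organisational difference is that you test \eqref{cond310} once against $(\sqrt{\Phi(x)},\sqrt{\Phi(y)})$, whereas the paper keeps $0.5\sigma^{2}(x^{2}X-y^{2}Y)$ and $\sigma^{2}e^{rt}a^{2}p\,(x^{4}X-y^{4}Y)$ separate, implicitly testing against $(x,y)$ and $\sqrt{|p|}\,(x^{2},y^{2})$; that is cosmetic.

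Two quantitative points in your version do not survive scrutiny. First, the derivative of $z\mapsto z\sigma\sqrt{0.5+e^{rt}a^{2}z^{2}p}$ contains the term $\sigma e^{rt}a^{2}z^{2}p/\sqrt{0.5+e^{rt}a^{2}z^{2}p}$, which for $p\geq 0$ is of order $|p|$, not $|p|^{1/2}$ (and for $p<0$ the radicand can vanish or go negative, as you note); so the correct bound is $(\sqrt{\Phi(x)}-\sqrt{\Phi(y)})^{2}\leq C(1+|p|)^{2}(x-y)^{2}$, which worsens your residual factor to $(1+\alpha|x-y|)^{2}$. Second --- and this is the genuine gap, which the paper shares --- a bound of the form $C(\alpha|x-y|^{2}+|x-y|)(1+\alpha|x-y|)^{k}$ is \emph{not} a function of $s=\alpha|x-y|^{2}+|x-y|$ alone: with $|x-y|=\delta$ and $\alpha=\delta^{-3/2}$ one has $s\to 0$ while $\alpha^{2}|x-y|^{3}=1$ (and the gradient-squared term $2\sigma^{2}e^{rt}a^{2}\alpha^{2}(x-y)^{2}(x^{3}-y^{3})$ really is of that size), so neither your $\omega(s)=C(s+s^{2})$ nor the paper's $\omega(s)=Cs$ literally closes the estimate. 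It is not "absorbed by the polynomial shape of $\omega$"; it requires restricting to the $(\alpha,x,y)$ produced by the doubling argument, where $\alpha|x-y|$ stays bounded, exactly as in condition (3.14) of Crandall--Ishii--Lions. You identified this correctly in your closing remark; the paper's proof glosses over both this and the sign issue in $\Phi$ (its properness lemma already tacitly assumes $1+2e^{rt}a^{2}x^{2}p\geq 0$), so on these points your proposal is no less rigorous than the original.
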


\begin{proof}
The first condition was proved in the previous lemma with $\gamma=q$.
with respect to the second condition, we have
\begin{eqnarray*}
&F (y,t , s, \alpha (x-y),Y)- F(x,t , s, \alpha (x-y),X)=
0.5\sigma^{2}(x^{2}X-y^{2}Y)+&\\
&+\sigma^{2}e^{rt}a^{2}\alpha (x-y)(x^{4}X- y^{4}Y)+ (r-q)\alpha(x-y)^{2} +&\\
 &+2\sigma^{2}e^{rt}a^{2}\alpha^{2} (x-y)^{2}(x^{3}-y^{3}).&
\end{eqnarray*}
Using the estimates given by (\ref{cond310}) and some power inequalities, we have
\begin{eqnarray*}
F (y,t , s, \alpha (x-y),Y)- F(x,t , s, \alpha (x-y),X)\leq C\left(|x-y| + \alpha |x-y|^{2}\right).
\end{eqnarray*}
The result now  follows, taking $\omega(s)=Cs$.
\end{proof}

Taking into account  Theorem \ref{teo_sss} and Theorem 8.2 in \cite{Crandall1992} we now have the following theorem.

\begin{theorem}
Let $\overline{u}$ and $\underline{u}$ be defined by (\ref{u_up}) and (\ref%
{u_down}) respectively then $\overline{u}\leq \underline{u}$ a.e in ${Q}_T$.
\end{theorem}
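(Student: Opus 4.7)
The plan is to apply the standard comparison principle for discontinuous viscosity solutions (Theorem 8.2 in \cite{Crandall1992}) to the subsolution $\overline{u}$ and the supersolution $\underline{u}$ produced by Theorem \ref{teo_sss}. All hypotheses on $F$ required by that result are in hand: properness from the previous lemma, strict monotonicity with $\gamma=q>0$, and the continuity modulus
\[
F(y,t,s,\alpha(x-y),Y)-F(x,t,s,\alpha(x-y),X)\leq\omega\bigl(\alpha|x-y|^{2}+|x-y|\bigr)
\]
under Ishii's matrix inequality (\ref{cond310}), both supplied by Lemma \ref{6501}. So the real task is to set up the data correctly and to make sure the standard doubling-of-variables argument runs.

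First I would check the ordering on the parabolic boundary. On $\{x=0\}$ and $\{x=b\}$ every $u^{\varepsilon}$ classically attains the continuous data $0$ and $1$, hence $\overline{u}=\underline{u}$ there. At $t=0$ the regularisation (\ref{eq:40011}) coincides with $u_{0}$ outside $[K-\varepsilon,K+\varepsilon]$, while the Hermite patch is uniformly bounded by (\ref{v0linf}); therefore $\overline{u}(x,0)\leq u_{0}^{*}(x)$ and $\underline{u}(x,0)\geq u_{0*}(x)$, with equality off the singleton $\{K\}$. This is exactly the inequality at $t=0$ required in Definition \ref{def_vis}, and it is also the reason the conclusion is stated only almost everywhere.

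With the data under control I would run the standard doubling argument. Suppose for contradiction $M:=\sup_{\bar{Q}_{T}}(\overline{u}-\underline{u})>0$. For $\alpha$ large and $\delta>0$ small set
\[
\Phi_{\alpha}(x,y,t,s)=\overline{u}(x,t)-\underline{u}(y,s)-\frac{\alpha}{2}|x-y|^{2}-\frac{\alpha}{2}|t-s|^{2}-\frac{\delta}{T-t},
\]
and let $(x_{\alpha},y_{\alpha},t_{\alpha},s_{\alpha})$ be a maximiser. Standard facts give $\alpha(|x_{\alpha}-y_{\alpha}|^{2}+|t_{\alpha}-s_{\alpha}|^{2})\to 0$ and $\max\Phi_{\alpha}\to M$. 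The penalty $\delta/(T-t)$ keeps the maximum away from $t=T$, while the boundary and initial inequalities above force the maximiser into the interior of $Q_{T}$ for $\alpha$ large whenever $M>0$. Ishii's lemma then delivers matrices $X_{\alpha},Y_{\alpha}$ satisfying (\ref{cond310}); subtracting the subsolution inequality for $\overline{u}$ at $(x_{\alpha},t_{\alpha})$ from the supersolution inequality for $\underline{u}$ at $(y_{\alpha},s_{\alpha})$ and invoking Lemma \ref{6501} yields
\[
\frac{\delta}{(T-t_{\alpha})^{2}}+\gamma\bigl(\overline{u}(x_{\alpha},t_{\alpha})-\underline{u}(y_{\alpha},s_{\alpha})\bigr)\leq\omega\bigl(\alpha|x_{\alpha}-y_{\alpha}|^{2}+|x_{\alpha}-y_{\alpha}|\bigr).
\]
Passing to $\alpha\to\infty$ and then $\delta\to 0^{+}$ gives $\gamma M\leq 0$, contradicting $M>0$.

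The step I expect to demand the most care is ruling out that the maximiser $(x_{\alpha},t_{\alpha})$ accumulates at the initial jump $(K,0)$: the usual argument needs a uniform bound on both envelopes $u_{0}^{*},u_{0*}$ near $K$, which the Hermite construction (\ref{eq:40011}) and estimate (\ref{v0linf}) provide, together with a localised reinforcement of the penalty near $(K,0)$ to exclude this isolated singularity from the supremum. Once this localisation is in place the Crandall-Ishii-Lions machinery applies without further change and delivers $\overline{u}\leq\underline{u}$ almost everywhere in $Q_{T}$.
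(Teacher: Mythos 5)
Your proposal follows the same route as the paper: the paper derives this theorem by combining Theorem \ref{teo_sss} with the comparison result (Theorem 8.2) of \cite{Crandall1992}, whose structural hypotheses are exactly the properness and modulus conditions supplied by the two preceding lemmas. You additionally unpack the doubling-of-variables argument behind that citation and flag the only delicate point (the ordering of the envelopes at the initial discontinuity $(K,0)$, which is the source of the ``a.e.''), so your write-up is a correct and somewhat more detailed version of the paper's one-line proof.
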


By definition $\underline{u}\leq \overline{u}$ in $Q_{T}$ and by the
previous theorem $\underline{u}\geq \overline{u}$ a.e. in $Q_{T}$ let us
consider $u=\underline{u}=\overline{u}$. Using the parabolic analogue of
Lemma 1.9 in chapter V of \cite{BC08} we conclude that, when $\varepsilon \rightarrow 0^{+}$, $u^{\varepsilon }
$ converges in $L^{\infty }$ to the viscosity solution $u=\underline{u}=\overline{u}$ of (\ref{eq:15000}).

\section{Final comments}
In this paper we  analyzed a nonlinear generalization of the Black- Scholes equations that arises when options are priced
under variable transaction costs for buying and selling underlying assets.
The mathematical model is represented by a fully nonlinear parabolic
equation with the diffusion coefficient depending linearly on the second
derivative of the option price. We proved the existence of not necessarily
continuous viscosity solutions. The vanishing viscosity method used provides a
way to determine   approximate numerical solutions. The à priori estimates
obtained are useful when we wish to prove analytically the convergence and
 convergence order of certain numerical methods. Indeed this is part of our
future work.

\section*{Acknowledgements}

This work was partially supported by the research projects: Grant N.\linebreak UID/MAT/00212/2019 - financed by FEDER through the - Programa Operacional
Factores de Competitividade, FCT - Funda\c{c}\~{a}o para a Ci\^{e}ncia e a
Tecnologia and Grant BID/ICI-FC/Santander Universidades-UBI/2016.




\end{document}